\theoremstyle{definition}
\newtheorem{theorem}{Theorem}
\newtheorem{definition}{Definition}
\newtheorem{proposition}{Proposition}
\newtheorem{lemma}{Lemma}
\newtheorem*{remark}{Remark}
\newtheorem{example}{example}
\newtheorem{assumption}{Assumption}
\newcommand{\hGamma}{\hat{\Gamma}}
\newcommand{\hG}{\hat{G}}
\newcommand{\ja}{\mathcal{J}^1}
\newcommand{\jb}{\mathcal{J}^0}
\newcommand{\jaa}{\mathcal{J}^{11}}
\newcommand{\jab}{\mathcal{J}^{10}}
\newcommand{\jba}{\mathcal{J}^{01}}
\newcommand{\jbb}{\mathcal{J}^{00}}
\newcommand{\tu}{\tilde{u}}
\newcommand{\hu}{\hat{u}}
\newcommand{\rti}{r_T^{-1}}
\newcommand{\rt}{r_T}
\newcommand{\ttheta}{\tilde{\theta}}
\newcommand{\htheta}{\hat{\theta}}
\newcommand{\dtheta}{\theta^{\dagger}}
\newcommand{\ctheta}{\check{\theta}}
\def\bd{\begin{description}}
\def\ed{\end{description}}
\def\D2{\bbD_{2,\infty-}}
\def\D{{\bf D}}
\def\I{{\bf I}}
\def\J{{\bf J}}
\def\calb{{\cal B}}
\def\calf{{\cal F}}
\def\cals{{\cal S}}
\def\sfd{{\sf d}}
\def\sfp{{\sf p}}
\def\up{\uparrow}
\def\be{\begin{equation}}
\def\ee{\end{equation}}
\def\bea{\begin{eqnarray}}
\def\eea{\end{eqnarray}}
\def\beas{\begin{eqnarray*}}
\def\eeas{\end{eqnarray*}}
\def\bi{\begin{itemize}}
\def\ei{\end{itemize}}
\def\im{\item}
\def\bd{\begin{description}}
\def\ed{\end{description}}
\newcommand{\bbB}{{\mathbb B}}
\newcommand{\bbD}{{\mathbb D}}
\newcommand{\bbR}{{\mathbb R}}
\newcommand{\bbY}{{\mathbb Y}}
\newif\ifrs
\ifrs \usepackage{mathrsfs} \fi  
\newif\ifcol
\newtheorem{theorem*}{Theorem}[section]
\newtheorem{note*}[theorem*]{Note}
\newtheorem{lemma*}[theorem*]{Lemma}
\newtheorem{definition*}[theorem*]{Definition}
\newtheorem{proposition*}[theorem*]{Proposition}
\newtheorem{corollary*}[theorem*]{Corollary}
\newtheorem{remark*}[theorem*]{Remark}
\newtheorem{example*}[theorem*]{Example}
\numberwithin{equation}{section}
\newif\ifcol
\newcommand{\colorr}{\color[rgb]{0.8,0,0}}
\newcommand{\colorn}{\color[rgb]{1,1,1}}
\newcommand{\colorr}{\color{black}}
\newcommand{\colorn}{\color{black}}
\begin{document}
\title{Penalized least squares approximation methods and their applications to stochastic processes
\footnote{
This work was in part supported by 
Japan Science and Technology Agency CREST JPMJCR14D7; 
Japan Society for the Promotion of Science Grants-in-Aid for Scientific Research 
No. 17H01702 (Scientific Research);  
and by a Cooperative Research Program of the Institute of Statistical Mathematics. 
}
}
\author{Takumi Suzuki}
\author{Nakahiro Yoshida}
\affil{Graduate School of Mathematical Sciences, University of Tokyo
\footnote{Graduate School of Mathematical Sciences, University of Tokyo: 3-8-1 Komaba, Meguro-ku, Tokyo 153-8914, Japan. e-mail: nakahiro@ms.u-tokyo.ac.jp}
       }
\affil{CREST, Japan Science and Technology Agency
        }

\maketitle


\begin{abstract}
	We construct an objective function that consists of a quadratic approximation term and an $L^q$ penalty $(0<q\le1)$ term. Thanks to the quadratic approximation, we can deal with various kinds of loss functions into a unified way, and by taking advantage of the $L^q$ penalty term, we can simultaneously execute variable selection and parameter estimation. In this article, we show that our estimator has oracle properties, and even better property. We also treat an stochastic processes as applications.
\end{abstract}

\section{Introduction}
The least absolute shrinkage and selection operator (LASSO; Tibshirani 1996\nocite{tibshirani1996regression}) is a useful and widely studied approach to the problem of variable selection. Compared with other estimation methods, LASSO's major advantage is simultaneous execution of both parameter estimation and variable selection (\cite{fan2001variable}, \cite{tibshirani1996regression}).

Originally, LASSO was introduced for linear regression problems. Suppose that ${\bf y} = [y_1, ...,y_T]'$ is a response vector and ${\bf x}_j = [x_{1j}, ..., x_{Tj}]', j=1, ..., d,$ are the linearly independent predictors.
\footnote{ The prime denotes the matrix transpose. }
Then the LASSO estimator is defined by
\begin{align} \label{lasso}
	\htheta_{\rm LASSO} = \underset{\theta \in \mathbb{R}^d}{{\rm argmin}} \left\{ \Biggl\| {\bf y} - \sum_{j=1}^d {\bf x}_j \theta_j \Biggr\|^2 + \lambda \sum_{j=1}^d|\theta_j| \right\} ,
\end{align}
where $\lambda$ is a nonnegative regularization parameter. The second term in (\ref{lasso}) is the so-called $L^1$ penalty. Thanks to the singularity of the $L^1$ penalty at the origin, LASSO can perform automatic variable selection.

However, it is known that LASSO variable selection could be inconsistent (see e.g. \cite{zou2006adaptive}), because LASSO forces the coefficients to be equally penalized in $L^1$ penalty. Zou considers the different weights to different coefficients, and the estimator so obtained is called the adaptive LASSO estimator. More precisely, the adaptive LASSO estimator $\htheta_{\rm aLASSO}$ is defined by 
\begin{align*}
	\htheta_{\rm aLASSO} = \underset{\theta \in \mathbb{R}^d}{{\rm argmin}} \left\{ \Biggl\| {\bf y} - \sum_{j=1}^d {\bf x}_j \theta_j \Biggr\|^2 + \lambda_T \sum_{j=1}^d \hat{w}_j |\theta_j| \right\},
\end{align*}
where $\hat{w} = [\hat{w}_j]_j$ is a weight vector defined by $\hat{w}_j = 1/|\hat{\theta}_j|^{\gamma}$ for some constant $\gamma>0$ and an initial estimator $\htheta = [\hat{\theta}_j]_j$. The adaptive LASSO method requires consistency of $\htheta$, however thanks to different weights, variable selection is always consistent.

On the other hand, LASSO is easily extended to a general loss function $\mathcal{L}_T(\theta)$ as
\begin{align*}
	\htheta_{\rm LASSO} &= \underset{\theta \in \mathbb{R}^d}{{\rm argmin}} \left\{ \mathcal{L}_T(\theta) + \lambda \sum_{j=1}^d|\theta_j| \right\},
\end{align*}
and its adaptive version is given by
\begin{align*}
	\htheta_{\rm aLASSO} = \underset{\theta \in \mathbb{R}^d}{{\rm argmin}} \left\{ \mathcal{L}_T(\theta) + \lambda_T \sum_{j=1}^d \hat{w}_j |\theta_j| \right\}.
\end{align*}
Though this generalization enables us to apply LASSO type methods to various statistical models, asymptotic and numerical theories are established in a case-by-case manner. One of the solutions to this problem is the least squares approximation (LSA) method proposed by Wang and Leng (2007 \cite{wang2007unified}). LSA estimator is defined by
\begin{align*}
	\htheta_{\rm LSA} = \underset{\theta \in \mathbb{R}^d}{{\rm argmin}} \left\{ (\theta - \ttheta) \hGamma (\theta - \ttheta) + \lambda_T \sum_{j=1}^d \hat{w}_j |\theta_j| \right\},
\end{align*}
where $\hGamma$ is a non-singular matrix depending on the data. Using the LSA method for the adaptive LASSO, we can deal with many different models in a unified frame.

Choice of the penalty term is an crucial issue in regularization techniques. A popular method is the Bridge (\cite{frank1993statistical}) that uses an $L^q$ penalty term ($q>0$). Bridge estimation with $0<q<1$ has the "oracle properties" (\cite{knight2000asymptotics}). Oracle properties are proposed by Fan and Li (2001 \cite{fan2001variable}) and a good estimator with variable selection should have these properties. Let $\theta^* = [\theta_j^*]_j$ is the true value of $\theta$ and $\mathcal{A} = \{j; \theta_j^* \ne 0\}$. An estimator $\htheta$ has oracle properties if $\htheta$ satisfies
\begin{itemize}
	\item selection consistency: $P[ \htheta_{\mathcal{A}^c} =  0 ] \rightarrow 1$, and
	\item asymptotic normality: $\sqrt{T}(\htheta_{\mathcal{A}} - \theta_{\mathcal{A}}^*) \rightarrow^d N(0,\Gamma^{-1})$, for some $|\mathcal{A}| \times |\mathcal{A}|$ positive definite symmetric matrix $\Gamma$.
\end{itemize}

In this paper, we consider the objective function $Q_T^{(q)}(\theta)$ that consists of an LSA term and an $L^q$ penalty term:
\begin{align*}
	Q_T^{(q)}(\theta) = (\theta - \ttheta)' \hG (\theta - \ttheta) + \lambda_T \sum_{j=1}^d \hat{w}_j |\theta_j|^q.
\end{align*}
Then, for $0<q\le1$, we define penalized least squares approximation (penalized LSA) estimator by $\htheta^{(q)} = {\rm argmin}_{\theta}Q_T^{(q)}(\theta)$ and show that this estimator has oracle properties. Regarding variable selection, in particular, we give the convergence rate of the probability that variable selection succeeds correctly.

Applications to stochastic processes are also considered in this article. In particular, we are interested in point process and diffusion processes. For the point process, first, we consider the general theory of ergodic intensity model. Then, as an example, we treat the Cox process using the quasi likelihood analysis (QLA) method. For the diffusion process, we consider the ergodic and non-ergodic diffusion processes. We also use QLA method in this case.

This article is organized as follows. Section 2 introduces the model and the precise definition of penalized LSA estimators. Main results with respect to penalized LSA estimators are stated in Section 3. In Section 4, we also define the P-O estimator, which is multi-step estimator using LSA methods and has computational advantage. Proofs of theorems are given in Section 5. Finally, we study the application to stochastic processes in Sections 6 and 7 and report some simulations in Section 8.

\section{Definition of the penalized LSA estimator}
Suppose that $\theta = [\theta_1, ... ,\theta_{\sfp}]' \in \mathbb{R}^{\sfp}$ is a parameter of interest and $\ttheta = [\ttheta_1, ... ,\ttheta_{\sfp}]' \in \mathbb{R}^{\sfp}$ is an estimator of $\theta$. In many cases, $\ttheta$ minimizes some loss function $\mathcal{L}_T(\theta)$, but we will not assume the existence of the loss function. $\ttheta$ depends on $T$, however, we omit $T$ for the sake of simplicity : $\ttheta = \ttheta_T$.

\begin{example}
	Consider a linear regression model $y_t = {\bf x}_t' \theta + \epsilon_t, (t = 1,...,T, T\in\mathbb{N})$, where $\epsilon_t$ has a distribution with mean 0 and covariance $\sigma^2$ and $\{{\bf x}_t\}_t$ is independent of $\{\epsilon_t\}_t$. Then $\ttheta$ is the least square estimator for $\mathcal{L}_T(\theta) = \sum_t |y_t - {\bf x}'_t \theta|^2$.\end{example}

\begin{example}
	If we consider a negative log-likelihood function as a loss function, then $\ttheta$ is the maximum likelihood estimator (MLE) of $\theta$.
\end{example}

Hereafter, we assume that there exists a true value $\theta^* = [\theta^*_1, ... ,\theta^*_{\sfp}]' \in \mathbb{R}^{\sfp}$ of $\theta$ and that $\sfp^0$ components of $\theta^*$ do not equal to $0$, $\sfp^0 = \#\{ j ; \theta^*_j \ne 0 \}$. Here, for convenience of explanation, we consider a loss function $\mathcal{L}_T(\theta)$. In order to carry out parameter estimation and variable selection simultaneously, we consider adding a penalty term to the loss function $\mathcal{L}_T(\theta)$. For example, we can take a penalized loss function as the adaptive lasso objective function by Zou (2006 \cite{zou2006adaptive}):
\begin{align}
	\frac{1}{T} \mathcal{L}_T(\theta) + \sum_{j=1}^{\sfp} \kappa_T^j |\theta_j|, \label{adalasso}
\end{align}
where $\kappa_T^j = \alpha_T |\ttheta_j|^{-\gamma}$ for a deterministic sequence $(\alpha_T)_T$ and a $\sqrt{T}$-consistent estimator $\ttheta$. \par
We consider quadratic approximation of the loss function instead of the first term of (\ref{adalasso}). Thanks to this approximation, we can discuss the various cases into an unified methodology, and because the behavior at the infinity is simply described, we can argue more depth discussion like large deviation. Moreover, we replace $L^1$ penalty with $L^q$ penalty $(0<q\le1)$ instead. Under this setting, we will show that we can execute parameter estimation and variable selection simultaneously in this case. More precisely, for a $\sfp \times \sfp$ almost surely positive definite symmetric random matrix $\hG$ depending on $T$, we use the objective function
\begin{align*}
	Q_T^{(q)}(\theta) = \hG [(\theta-\ttheta)^{\otimes 2}]+ \sum_{j=1}^{\sfp} \kappa_T^j |\theta_j|^q,
\end{align*}
where $\kappa_T^j$ are nonnegative random variables, $A^{\otimes 2} = AA'$ for some matrix or vector $A$ and $A[B] = {\rm Tr}(AB')$ for matrices $A$ and $B$ of the same size. \par
For twice differentiable $\mathcal{L}_T(\theta)$, $T^{-1}\mathcal{L}_T(\theta)$ is approximated as
\[
	\frac{1}{T} \mathcal{L}_T(\theta) 
	\approx \frac{1}{T} \mathcal{L}_T(\ttheta) + \frac{1}{T} (\theta - \ttheta)' \partial_{\theta}{\mathcal{L}}_T(\ttheta) 
	+ \frac12 \Bigl\{ \frac{1}{T} \partial_{\theta}^2{\mathcal{L}}_T(\ttheta) \Bigr\} [(\theta - \ttheta)^{\otimes 2}].
\]
Here, the first term on the right hand side is constant with respect to $\theta$ and the second term vanishes by the definition of $\ttheta$. Thus, instead of minimizing $T^{-1} \mathcal{L}_T(\theta)$, we may minimize $\{T^{-1} \partial_{\theta}^2{\mathcal{L}}_T(\ttheta)\} [(\theta - \ttheta)^{\otimes 2}]$ and in this case we can take $\hG = T^{-1} \partial_{\theta}^2{\mathcal{L}}_T(\ttheta)$ for example. \par
Let $\htheta^{(q)} = [\htheta_1^{(q)}, ... ,\htheta_{\sfp}^{(q)}]'$ be a minimizer of this objective function $Q_T^{(q)}(\theta)$ :
\[
	\htheta^{(q)} \in \underset{\theta \in \Theta}{\text{argmin }}  Q_T^{(q)}(\theta)
\]
We call $\htheta^{(q)}$ the penalized least squares approximation (penalized LSA) estimator. 

\section{Results for penalized LSA estimator}
In this section, we will show asymptotic properties of the penalized LSA estimator $\htheta^{(q)}$ based on $Q_T^{(q)}(\theta)$. 
Suppose that the statistics are realized on a probability space $(\Omega,\calf,P)$.
To describe the results, we may suppose that $\theta^*_1 \ne 0, ... ,\theta^*_{\sfp^0} \ne 0$ and $\theta^*_{\sfp^0+1} = ... = \theta^*_{\sfp} = 0$ without loss of generality. 
Let
\[
	a_T = \max\{\kappa_T^j ; j \le \sfp^0 \} \quad \text{and} \quad b_T = \min\{\kappa_T^j ; j > \sfp^0 \}.
\]
For a vector $v = [v_1, ... ,v_{\sfp}]' \in \mathbb{R}^{\sfp}$, we denote subvectors $[v_1, ... ,v_{\sfp^0}]'$ and $[v_{\sfp^0+1}, ... ,v_{\sfp}]'$ by $v_{\ja}$ and $v_{\jb}$ respectively.
\par

We consider the following conditions with respect to $\ttheta$ and $\hG$. Let $r_T$ be a sequence of positive numbers tending to $0$ as $T \rightarrow \infty$. We often consider the case that $r_T = T^{-1/2}$.

\begin{assumption}\label{ass:cons_of_G}
	There exists a positive definite symmetric random matrix $G$ such that $\hG \rightarrow^p G$.
\end{assumption}
\begin{assumption}\label{ass:rootn}
	$\ttheta$ is $\rti$-consistent, i.e., $\rti(\ttheta - \theta^*) = O_p(1)$．
\end{assumption}
\begin{assumption}\label{ass:asynorm}
	$\rti(\ttheta - \theta^*) \rightarrow^{d_s} \Gamma^{-\frac12} \zeta$ holds, where $\Gamma$ is a $\sfp \times \sfp$ positive definite symmetric matrix, $\zeta$ is a $\sfp$-dimensional standard Gaussian random vector defined on an extended probability space of $(\Omega, \mathcal{F}, P)$ and independent of $\mathcal{G}$, and $d_s$ denotes the $\mathcal{G}$-stable convergence for some $\sigma$-field $\mathcal{G}$ such that $\sigma(\Gamma) \subset \mathcal{G} \subset \mathcal{F}$.
\end{assumption}

Of course, Assumption \ref{ass:asynorm} is stronger than Assumption \ref{ass:rootn}, but $\rti$-consistency and selection consistency of the penalized LSA estimator $\htheta^{(q)}$ are derived from Assumptions \ref{ass:cons_of_G} and \ref{ass:rootn}. We need Assumption \ref{ass:asynorm} to show asymptotic normality of penalized LSA estimator $\htheta^{(q)}$. 

For a $\sfp \times \sfp$ matrix $M = [m_{ij}]_{1\le i \le \sfp, 1\le j \le \sfp}$, we denote the $\sfp^0 \times \sfp^0$ matrix $[m_{ij}]_{1\le i \le \sfp^0, 1 \le j \le \sfp^0}$, $\sfp^0 \times (\sfp-\sfp^0)$ matrix $[m_{ij}]_{1 \le i \le \sfp^0, \sfp^0 < j \le \sfp}$, $(\sfp-\sfp^0) \times \sfp^0$ matrix $[m_{ij}]_{\sfp^0 < i \le \sfp, 1 \le j \le \sfp^0}$ and $(\sfp-\sfp^0) \times (\sfp-\sfp^0)$ matrix $[m_{ij}]_{\sfp^0 < i \le \sfp, \sfp^0 < j \le \sfp}$ by $M_{\jaa}, M_{\jab}, M_{\jba}$ and $M_{\jbb}$ respectively:
\begin{align*}
	M = \begin{bmatrix} M_{\jaa} & M_{\jab} \\ M_{\jba} & M_{\jbb} \end{bmatrix}.
\end{align*}

\begin{theorem}[$\rti$-consistency]\label{thm:rootn}
	Under Assumptions \ref{ass:cons_of_G} and \ref{ass:rootn},
	if $\rti a_T = O_p(1)$, 
	then 
	\[
		\rti(\htheta^{(q)} - \theta^*) = O_p(1).
	\]
\end{theorem}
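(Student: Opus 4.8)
The plan is to rescale $Q_T^{(q)}$ around $\theta^*$ and run a localization argument in which the quadratic term controls the penalty. Set $\tu := \rti(\ttheta - \theta^*)$, so $\tu = O_p(1)$ by Assumption \ref{ass:rootn}, and for $u \in \mathbb{R}^{\sfp}$ define $\mathbb{H}_T(u) := \rt^{-2}\bigl\{Q_T^{(q)}(\theta^* + \rt u) - Q_T^{(q)}(\theta^*)\bigr\}$. Since $\theta \mapsto \theta^* + \rt u$ is affine and $Q_T^{(q)}$ is minimized at $\htheta^{(q)}$, the vector $\hu := \rti(\htheta^{(q)} - \theta^*)$ minimizes $\mathbb{H}_T$; it thus suffices to prove $\hu = O_p(1)$, and we use throughout that $\mathbb{H}_T(\hu) \le \mathbb{H}_T(0) = 0$. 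Using $\theta^* + \rt u - \ttheta = \rt(u - \tu)$ and $\hG[v^{\otimes 2}] = v'\hG v$,
\[
	\mathbb{H}_T(u) = u'\hG u - 2u'\hG\tu + \rt^{q-2}\sum_{j > \sfp^0}\kappa_T^j|u_j|^q + \rt^{-2}\sum_{j \le \sfp^0}\kappa_T^j\bigl(|\theta_j^* + \rt u_j|^q - |\theta_j^*|^q\bigr),
\]
where the $j > \sfp^0$ sum is nonnegative since each $\kappa_T^j \ge 0$.

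First I would record a preliminary rate. Because $\theta_j^* = 0$ for $j > \sfp^0$ and $a_T = \rt(\rti a_T) = O_p(\rt)$,
\[
	Q_T^{(q)}(\theta^*) = \hG\bigl[(\theta^* - \ttheta)^{\otimes 2}\bigr] + \sum_{j \le \sfp^0}\kappa_T^j|\theta_j^*|^q \le \|\hG\|\,|\theta^* - \ttheta|^2 + a_T\sum_{j \le \sfp^0}|\theta_j^*|^q = O_p(\rt^2) + O_p(\rt) = O_p(\rt),
\]
using $\hG \rightarrow^p G$ and Assumption \ref{ass:rootn}. As the penalty is nonnegative and $Q_T^{(q)}(\htheta^{(q)}) \le Q_T^{(q)}(\theta^*)$, we get $\lambda_{\min}(\hG)\,|\htheta^{(q)} - \ttheta|^2 \le \hG[(\htheta^{(q)} - \ttheta)^{\otimes 2}] \le Q_T^{(q)}(\theta^*) = O_p(\rt)$; since $\lambda_{\min}(\hG) \rightarrow^p \lambda_{\min}(G) > 0$ by Assumption \ref{ass:cons_of_G}, this gives $|\htheta^{(q)} - \ttheta| = O_p(\rt^{1/2})$, hence $\hu = O_p(\rt^{-1/2})$ after adding $\tu = O_p(1)$.

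Next I would bound $\mathbb{H}_T$ from below on $\{|u| \le C\rt^{-1/2}\}$, a region that contains $\hu$ with probability close to $1$ once $C$ is large. On this region $\rt|u_j| \le C\rt^{1/2} \to 0$, so for each $j \le \sfp^0$ the scalar $\theta_j^* + \rt u_j$ eventually lies in a fixed neighborhood of $\theta_j^* \ne 0$ avoiding $0$, on which $x \mapsto |x|^q$ is Lipschitz with constant $C_j := q(|\theta_j^*|/2)^{q-1}$; hence $\bigl|\,|\theta_j^* + \rt u_j|^q - |\theta_j^*|^q\,\bigr| \le C_j\rt|u_j|$ uniformly there for $T$ large. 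Dropping the nonnegative $j > \sfp^0$ sum and using $\kappa_T^j \le a_T$ for $j \le \sfp^0$,
\[
	\mathbb{H}_T(u) \ge u'\hG u - 2u'\hG\tu - (\rti a_T)\sum_{j \le \sfp^0}C_j|u_j| \ge \lambda_{\min}(\hG)\,|u|^2 - M_T|u|,
\]
with $M_T := 2\|\hG\|\,|\tu| + \sqrt{\sfp^0}\,(\rti a_T)\max_{j \le \sfp^0}C_j = O_p(1)$ by Assumptions \ref{ass:cons_of_G}, \ref{ass:rootn} and the hypothesis $\rti a_T = O_p(1)$. Evaluating this at $u = \hu$ (valid on the event $\{|\hu| \le C\rt^{-1/2}\}$) and combining with $\mathbb{H}_T(\hu) \le 0$ yields $\lambda_{\min}(\hG)|\hu|^2 \le M_T|\hu|$, i.e.\ $|\hu| \le M_T/\lambda_{\min}(\hG) = O_p(1)$; since this holds on an event whose probability tends to $1$ (the constant $C$ being fixed in terms of a prescribed tolerance), a routine argument gives $\rti(\htheta^{(q)} - \theta^*) = \hu = O_p(1)$.

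The step I expect to be the main obstacle is the penalty on the active coordinates $j \le \sfp^0$. For $0 < q < 1$ this term is nonconvex, and its only cheap global lower bound, $-\kappa_T^j|\theta_j^*|^q$, is of order $a_T \asymp \rt$ — larger than the rescaled quadratic term, which is of order $\rt^2$ at $|u| \asymp 1$ — so the localization cannot be run at a fixed scale. The preliminary $\rt^{1/2}$-rate is exactly what saves it: it pins $\htheta^{(q)}$ to a neighborhood of $\theta^*$ on which $\rt u_j \to 0$, so $|x|^q$ is effectively linear there and the penalty contributes only $O_p(\rti a_T)\,|u| = O_p(1)\,|u|$, which the quadratic term $\asymp |u|^2$ dominates once $|u|$ exceeds an $O_p(1)$ threshold.
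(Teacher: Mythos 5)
Your proof is correct, but it takes a genuinely different route from the paper's. The paper's proof is a one-step global argument: from concavity of $x\mapsto x^q$ on $[0,\infty)$ one gets, for \emph{every} value of $\htheta^{(q)}_j$ (not only values close to $\theta_j^*$), the bound $|\htheta^{(q)}_j|^q-|\theta^*_j|^q\ge -|\theta^*_j|^{q-1}\,|\htheta^{(q)}_j-\theta^*_j|$ for $j\le\sfp^0$ (the worst case being $\htheta^{(q)}_j=0$). Substituting this directly into $Q_T^{(q)}(\htheta^{(q)})-Q_T^{(q)}(\theta^*)\le 0$ and completing the square yields the pathwise inequality
\[
|\rti(\htheta^{(q)}-\theta^*)|\le \|\hG^{-1}\|\Bigl(2\|\hG\|\,|\rti(\ttheta-\theta^*)|+\sfp^0K^*\rti a_T\Bigr),\qquad K^*=\max_{j\le\sfp^0}|\theta^*_j|^{q-1},
\]
with no localization needed. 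This is precisely the global linear lower bound you decided did not exist in your closing paragraph ("its only cheap global lower bound is $-\kappa_T^j|\theta^*_j|^q$"): the secant-slope estimate $(b^q-a^q)/(b-a)\le b^{q-1}$ for $0\le a<b$ gives a linear bound with slope controlled by $\theta^*$ alone, uniformly over the whole range of $\htheta^{(q)}_j$. Your workaround — a preliminary $O_p(\rt^{1/2})$ rate for $|\htheta^{(q)}-\ttheta|$ obtained from $Q_T^{(q)}(\theta^*)=O_p(\rt)$, followed by a local Lipschitz bound on $|x|^q$ near $\theta_j^*\ne 0$ — is valid and is the standard "localize then linearize" scheme; each step checks out, including the event-splitting at the end. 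What the paper's route buys, beyond brevity, is that the resulting bound holds identically on all of $\Omega$ with an explicit right-hand side, which is exactly what is reused in the proof of Theorem \ref{thm:rate} to upgrade $O_p(1)$ to $L^{\infty-}$-boundedness of $\{\hu\}_T$ under Assumption \ref{ass:Lpbdd}; your event-based argument would have to be redone there. Your route, on the other hand, does not lean on the specific concavity of the power penalty and would survive more general penalties that are merely locally Lipschitz away from the origin.
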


\begin{theorem}[Selection consistency]\label{thm:select}
	Under Assumptions \ref{ass:cons_of_G} and \ref{ass:rootn},
	if $\rti a_T = O_p(1)$ and $\rt^{-(2-q)} b_T \rightarrow^p \infty$, 
	then 
	\[
		P[\htheta_{\jb}^{(q)} = 0] \rightarrow 1.
	\]
\end{theorem}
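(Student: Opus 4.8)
The plan is to contradict the minimizing property of $\htheta^{(q)}$ by a one‑coordinate perturbation that exploits the singularity of the $L^q$ penalty at the origin. Write $\htheta=\htheta^{(q)}$ and $u=\htheta-\ttheta$. Fix $j\in\jb$, so that $\theta^*_j=0$, and suppose $\htheta_j\ne0$; let $\htheta^{(j)}$ be the vector obtained from $\htheta$ by setting its $j$‑th coordinate to $0$, so that $\htheta^{(j)}-\ttheta=u-\htheta_j e_j$ with $e_j$ the $j$‑th unit vector. Using the identity $\hG[(u-ce_j)^{\otimes2}]=\hG[u^{\otimes2}]-2c(\hG u)_j+c^2\hG_{jj}$ (valid since $\hG$ is symmetric) with $c=\htheta_j$, and noting that the $j$‑th penalty term of $\htheta^{(j)}$ vanishes, a short computation gives
\[
  Q_T^{(q)}(\htheta)-Q_T^{(q)}(\htheta^{(j)})=2\htheta_j(\hG u)_j-\htheta_j^2\hG_{jj}+\kappa_T^j|\htheta_j|^q
  \ \ge\ |\htheta_j|\Bigl(b_T|\htheta_j|^{q-1}-2|(\hG u)_j|-|\htheta_j|\,|\hG_{jj}|\Bigr),
\]
where the inequality uses $\kappa_T^j\ge b_T$ for $j>\sfp^0$.

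Next I would estimate the three terms in the bracket by their stochastic orders. Assumption \ref{ass:cons_of_G} gives $\hG=O_p(1)$; Theorem \ref{thm:rootn} together with Assumption \ref{ass:rootn} gives $\rti(\htheta-\theta^*)=O_p(1)$ and $\rti(\ttheta-\theta^*)=O_p(1)$, hence $u=O_p(\rt)$ and, since $\theta^*_j=0$, also $\htheta_j=O_p(\rt)$. Thus, given $\varepsilon>0$, there are a constant $C\ge1$ and an event $A_T$ with $\liminf_T P[A_T]\ge1-\varepsilon$ on which $|(\hG u)_j|\le C\rt$, $|\hG_{jj}|\le C$ and $0<|\htheta_j|\le C\rt$ simultaneously for every index; since $0<q\le1$, on $A_T$ one also has $|\htheta_j|^{q-1}\ge(C\rt)^{q-1}$. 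Plugging these bounds in, the right‑hand side above is, on $A_T$, at least
\[
  |\htheta_j|\,\rt\,\bigl(C^{q-1}\,b_T\,\rt^{-(2-q)}-3C^2\bigr).
\]
Because $\rt^{-(2-q)}b_T\rightarrow^p\infty$, there is an event $B_T$ with $\liminf_T P[B_T]\ge1-\varepsilon$ on which $C^{q-1}b_T\rt^{-(2-q)}>3C^2$; then on $A_T\cap B_T$ the displayed quantity is strictly positive whenever $\htheta_j\ne0$, contradicting $Q_T^{(q)}(\htheta)\le Q_T^{(q)}(\htheta^{(j)})$. Hence on $A_T\cap B_T$ we must have $\htheta_j=0$ for all $j\in\jb$, i.e.\ $\htheta^{(q)}_{\jb}=0$, so $P[\htheta^{(q)}_{\jb}=0]\ge P[A_T\cap B_T]\ge1-2\varepsilon$ for $T$ large; letting $\varepsilon\downarrow0$ finishes the proof.

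The quadratic‑form identity and the stochastic‑order bookkeeping are routine. The step that has to be set up carefully — and the place where the hypothesis $\rt^{-(2-q)}b_T\rightarrow^p\infty$ is exactly what is needed — is the lower bound $|\htheta_j|^{q-1}\ge(C\rt)^{q-1}$ combined with $\kappa_T^j\ge b_T$: it says that near the origin the marginal cost of the $L^q$ penalty in coordinate $j$ is of order at least $b_T\rt^{q-1}$, which must dominate the $O_p(\rt)$ contribution of the linearized quadratic term, i.e.\ $b_T\rt^{q-1}\gg\rt$. Two minor points: the constant $C$ depends on $\varepsilon$, so the threshold defining $B_T$ is chosen afterwards; and one should check $\htheta^{(j)}\in\Theta$ (immediate if $\Theta=\mathbb{R}^{\sfp}$, or with probability tending to one if $\theta^*$ is interior, since $\htheta\rightarrow^p\theta^*$), the perturbation then contradicting minimality through a single coordinate.
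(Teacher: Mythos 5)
Your proof is correct, but it runs along a different track from the paper's. The paper exploits first-order stationarity: assuming $\htheta^{(q)}_j\ne0$ for some $j>\sfp^0$, it differentiates $Q_T^{(q)}$ in the $j$-th coordinate (legitimate since $|\theta_j|^q$ is smooth away from the origin), rearranges the resulting identity into
$2\bigl|\hG^{(j)}\{\rti(\htheta^{(q)}-\ttheta)\}\bigr|\,|\rti\htheta^{(q)}_j|^{1-q}=q\,\rt^{-(2-q)}\kappa_T^j\ge q\,\rt^{-(2-q)}b_T$,
and concludes because the left side is $O_p(1)$ by Theorem \ref{thm:rootn} while the right side diverges in probability. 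You instead use a zeroth-order perturbation: compare the objective at $\htheta^{(q)}$ with its value after zeroing the $j$-th coordinate, and show the penalty's marginal gain $b_T|\htheta_j|^{q-1}\gtrsim b_T\rt^{q-1}$ beats the $O_p(\rt)$ change in the quadratic part. Both arguments hinge on exactly the same balance $b_T\rt^{q-1}\gg\rt$, i.e.\ $\rt^{-(2-q)}b_T\to^p\infty$, and both need the perturbed point (or a one-coordinate neighborhood) to remain feasible. The paper's version is shorter and cleaner once one accepts the differentiation step; yours avoids differentiating the objective entirely, makes the role of the singularity at the origin explicit, and costs only some event/constant bookkeeping (your phrasing ``$0<|\htheta_j|\le C\rt$ on $A_T$'' should be read as ``$|\htheta_j|\le C\rt$ on $A_T$, and the contradiction is derived on $A_T\cap B_T\cap\{\htheta_j\ne0\}$,'' which is how your argument in fact proceeds). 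Either route is a complete proof of the theorem.
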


\begin{theorem}[Asymptotic normality]\label{thm:asynorm} 
	Let $\mathfrak{G} = \begin{bmatrix}  I_{\sfp^0} & (G_{\jaa})^{-1} G_{\jab} \end{bmatrix}$ for $\sfp^0 \times \sfp^0$ identity matrix  $I_{\sfp^0}$.
	Under Assumptions \ref{ass:cons_of_G} and \ref{ass:rootn}, if $\rti a_T = o_p(1)$ and $\rt^{-(2-q)} b_T \rightarrow^p \infty$, 
	then 
	\[
		\rti (\htheta^{(q)} - \theta^*)_{\ja} - \mathfrak{G} \{ \rti(\ttheta-\theta^*) \} \rightarrow^p 0.
	\]
	In particular, under Assumption \ref{ass:asynorm} and $G = \Gamma$, we have 
	\[
		\rti (\htheta^{(q)} - \theta^*)_{\ja} \rightarrow^{d_s} \mathfrak{G} \Gamma^{-\frac12}\zeta \sim {\rm MN}_{\sfp^0}(0, (\Gamma_{\jaa})^{-1}).
	\]
\end{theorem}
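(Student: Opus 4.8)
The plan is to localize the minimization of $Q_T^{(q)}$ by the usual rescaling argument, then exploit Theorems~\ref{thm:rootn} and~\ref{thm:select} to reduce the problem to an unpenalized quadratic minimization in the "active" coordinates. Concretely, write $\htheta^{(q)} = \theta^* + \rt u^{(q)}$ and $\ttheta = \theta^* + \rt \tu$, where $\tu = \rti(\ttheta-\theta^*) = O_p(1)$ by Assumption~\ref{ass:rootn}. By Theorem~\ref{thm:rootn}, $u^{(q)} = O_p(1)$, and by Theorem~\ref{thm:select}, $P[\htheta^{(q)}_{\jb} = 0] \to 1$, so with probability tending to one $u^{(q)}_{\jb} = \rti(\htheta^{(q)} - \theta^*)_{\jb} = -\rti\theta^*_{\jb} = 0$ (using $\theta^*_{\jb} = 0$). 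Hence on an event of probability $\to 1$, $u^{(q)}$ is the minimizer over $\{v : v_{\jb} = 0\}$ of the rescaled objective $r_T^{-2}\bigl(Q_T^{(q)}(\theta^* + \rt v) - Q_T^{(q)}(\ttheta)\bigr)$.

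The next step is to identify the dominant term of that rescaled objective on the active block. The quadratic part $\hG[(\theta - \ttheta)^{\otimes 2}] = \rt^2\,\hG[(v - \tu)^{\otimes 2}]$, so after dividing by $\rt^2$ it contributes $\hG[(v-\tu)^{\otimes 2}] \to^p G[(v-\tu)^{\otimes 2}]$ by Assumption~\ref{ass:cons_of_G}. The penalty contributes $r_T^{-2}\sum_{j\le \sfp^0}\kappa_T^j|\theta^*_j + \rt v_j|^q$ (the inactive terms vanish once $v_{\jb}=0$); since $\theta^*_j\ne 0$ for $j\le\sfp^0$, a Taylor expansion gives $|\theta^*_j + \rt v_j|^q = |\theta^*_j|^q + O_p(\rt)$, so this term is $O_p(r_T^{-2}\cdot\rt a_T\cdot 1)$ for the $v$-dependent part — wait, more carefully: the $v$-dependent part of the penalty is $\sum_{j\le\sfp^0}\kappa_T^j(|\theta^*_j+\rt v_j|^q - |\theta^*_j|^q) = O_p(\rt a_T)$ uniformly on compacts, hence after dividing by $\rt^2$ it is $O_p(\rti a_T)\cdot O_p(\rt) = o_p(1)$ using the hypothesis $\rti a_T = o_p(1)$. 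Therefore, on compact sets, $r_T^{-2}(Q_T^{(q)}(\theta^*+\rt v) - \text{const})$ converges in probability, uniformly, to $v \mapsto G[(v-\tu)^{\otimes 2}]$ restricted to $\{v_{\jb}=0\}$, plus an additive term not depending on $v$.

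Minimizing $G[(v-\tu)^{\otimes 2}] = (v-\tu)'G(v-\tu)$ over $v$ with $v_{\jb} = 0$: partition $v - \tu$ accordingly and note the minimizer of a positive-definite quadratic form subject to fixing the second block at $-\tu_{\jb}$ has first block $(v-\tu)_{\ja} = -(G_{\jaa})^{-1}G_{\jab}(v-\tu)_{\jb}$, i.e. $v_{\ja} - \tu_{\ja} = (G_{\jaa})^{-1}G_{\jab}\tu_{\jb}$, so $v_{\ja} = \tu_{\ja} + (G_{\jaa})^{-1}G_{\jab}\tu_{\jb} = \mathfrak{G}\tu$. A standard argmax/argmin convergence argument (the quadratic limit has a unique minimizer and the convergence is uniform on compacts, while $u^{(q)}$ is tight) then yields $u^{(q)}_{\ja} - \mathfrak{G}\tu \to^p 0$, which is precisely $\rti(\htheta^{(q)}-\theta^*)_{\ja} - \mathfrak{G}\{\rti(\ttheta-\theta^*)\} \to^p 0$. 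For the second assertion, under Assumption~\ref{ass:asynorm} with $G = \Gamma$ we have $\rti(\ttheta-\theta^*) \to^{d_s} \Gamma^{-1/2}\zeta$, so by the continuous mapping theorem for stable convergence combined with the $o_p(1)$ approximation, $\rti(\htheta^{(q)}-\theta^*)_{\ja} \to^{d_s} \mathfrak{G}\Gamma^{-1/2}\zeta$; the limit is (conditionally on $\mathcal{G}$) centered Gaussian with covariance $\mathfrak{G}\Gamma^{-1/2}(\mathfrak{G}\Gamma^{-1/2})' = \mathfrak{G}\Gamma^{-1}\mathfrak{G}'$, and a block-matrix computation using the Schur complement identifies $\mathfrak{G}\Gamma^{-1}\mathfrak{G}' = (\Gamma_{\jaa})^{-1}$, giving the stated mixed normal law.

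The main obstacle I anticipate is making the reduction to the active block fully rigorous: the event $\{\htheta^{(q)}_{\jb} = 0\}$ has probability only tending to one, so one must argue that on its complement the contribution is asymptotically negligible, and one must ensure the argmin-convergence theorem is applied to a genuinely random but tight sequence $u^{(q)}$. The clean way is to work conditionally on large-probability events and invoke a convexity-free argmin theorem (e.g. the limit objective has a unique minimizer and local uniform convergence holds), which requires verifying the uniform-on-compacts convergence claimed above — the only delicate piece there being the uniform control of the penalty increment $\kappa_T^j(|\theta^*_j + \rt v_j|^q - |\theta^*_j|^q)$, which is routine since $q \in (0,1]$ and $\theta^*_j$ is bounded away from $0$ on the active set.
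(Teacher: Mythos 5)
Your argument is correct, but it takes a genuinely different route from the paper. The paper also localizes to the event $A_T=\{\min_{j\le \sfp^0}|\htheta^{(q)}_j|>0,\ \htheta^{(q)}_{\jb}=0,\ \det(\hG_{\jaa})\ne0\}$, but then works with the exact first-order condition on the active block: on $A_T$ it writes $0=\hG_{\jaa}(\htheta^{(q)}-\ttheta)_{\ja}-\hG_{\jab}\ttheta_{\jb}+V(\htheta^{(q)}_{\ja})$, where $V$ collects the derivatives $2^{-1}q\kappa_T^j|\htheta^{(q)}_j|^{q-1}\mathrm{sgn}(\htheta^{(q)}_j)$, solves explicitly for $(\htheta^{(q)})_{\ja}$, and kills the correction term via $\rti(\hG_{\jaa})^{-1}V(\htheta^{(q)}_{\ja})=o_p(1)$, which follows from $\rti a_T=o_p(1)$ and $|\htheta^{(q)}_j|^{q-1}\to^p|\theta^*_j|^{q-1}<\infty$. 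You instead prove uniform-on-compacts convergence of the rescaled objective to the quadratic $v\mapsto \hG[(v-\tu)^{\otimes2}]$ on $\{v_{\jb}=0\}$ and argue via argmin convergence; the block computation identifying the constrained minimizer as $\hat{\mathfrak{G}}\tu$ and the Schur-complement identity $\mathfrak{G}\Gamma^{-1}\mathfrak{G}'=(\Gamma_{\jaa})^{-1}$ are both right. The one step you should make explicit is the passage from uniform closeness of objectives to the \emph{coupled} statement $u^{(q)}_{\ja}-\hat{\mathfrak{G}}\tu\to^p0$: the textbook argmax theorem gives only distributional convergence and does not directly apply here because the ``limit'' quadratic still depends on the non-convergent random element $\tu$. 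What rescues you is strong convexity: on $\{v_{\jb}=0\}$ one has exactly $\hG[(v-\tu)^{\otimes2}]-\hG[(\hat v-\tu)^{\otimes2}]=(v_{\ja}-\hat v_{\ja})'\hG_{\jaa}(v_{\ja}-\hat v_{\ja})\ge\lambda_{\min}(\hG_{\jaa})|v_{\ja}-\hat v_{\ja}|^2$ with $\lambda_{\min}(\hG_{\jaa})$ bounded away from $0$ in probability, so an $o_p(1)$ perturbation of the objective moves the minimizer by $o_p(1)$. With that inserted (you flag it yourself), your proof is complete; its advantage over the paper's is that it never differentiates the penalty, while the paper's stationarity computation is shorter and gives the conclusion in one explicit identity.
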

\vspace{10pt}
Hereafter, we consider $\kappa_T^j = \alpha_T |\ttheta_j|^{-\gamma}$, where $\gamma$ is a constant satisfying $\gamma > -(1-q)$ and $( \alpha_T )_T$ is a deterministic sequence. If $( \alpha_T )_T$ satisfies the conditions
\[
	\rt^{-(2-q+\gamma)} \alpha_T \rightarrow \infty
\]
and
\[
	\rti \alpha_T = o(1).
\]
Then Theorems \ref{thm:rootn}-\ref{thm:asynorm} follows from Assumptions \ref{ass:rootn} and \ref{ass:asynorm}. Moreover we will show that the probability $P[ \htheta^{(q)}_{\jb} = 0 ]$ can be evaluated by any power of $\rt$. \par
Let $\tu = \rti (\ttheta-\theta^*)$ and $\hu = \rti (\htheta^{(q)}-\theta^*)$. 
\begin{definition}
	For a stochastic process $X = \{ X_T \}_T$ is $L^{\infty-}$-bounded if and only if $\sup_T E[|X_T|^p] < \infty$ holds for all $p \ge 1$.
\end{definition}
Additionally, we consider the following conditions:

\begin{assumption} \label{ass:Lpbdd} \
	$\{ \hG \}_T$, $\{ \hG^{-1} \}_T$ and $\{ \tu \}_T$ are $L^{\infty-}$-bounded.
\end{assumption}
\begin{remark}
	The $L^p$-boundedness of a sequence of estimators can be obtained by the quasi likelihood analysis with a polynomial type large deviation inequality for an associated statistical random field. See \cite{yoshida2011polynomial} for details.
\end{remark}
\begin{theorem} \label{thm:rate}
	Let $\epsilon \in \bigl( -1+q, \gamma \bigr)$.
	We assume that $\rt^{1+\gamma-\epsilon} \alpha_T^{-1} = O(1)$ and $\rti \alpha_T = O(1)$.
	Then under Assumptions \ref{ass:cons_of_G} and \ref{ass:Lpbdd}, $\{ \hu \}_T$ is $L^{\infty-}$-bounded. Moreover, for all $L > 0$, there exists a constant $C_L$ such that 
	\begin{align*}
		P[\htheta_{\jb}^{(q)} = 0] \ge 1 - C_L \rt^{2L}
	\end{align*}
	for all $T>0$.
\end{theorem}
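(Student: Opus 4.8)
The plan is to reparametrize, establish the moment bound first, and then derive the selection rate from it. Writing $u=\rti(\theta-\theta^*)$ and
\[
\mathbb{Q}_T(u)=\rti^2\,Q_T^{(q)}(\theta^*+\rt u)=\hG[(u-\tu)^{\otimes 2}]+\rti^2\sum_{j=1}^{\sfp}\kappa_T^j\,|\theta^*_j+\rt u_j|^q ,
\]
the normalized estimator $\hu$ is a minimizer of $\mathbb{Q}_T$. Put $c_0=\min_{j\le\sfp^0}|\theta^*_j|>0$. I will use repeatedly that finite sums, products, positive powers and square roots of $L^{\infty-}$-bounded variables are $L^{\infty-}$-bounded; that $\lambda_{\min}(\hG)^{-1}=\lambda_{\max}(\hG^{-1})$ is $L^{\infty-}$-bounded by Assumption \ref{ass:Lpbdd}; and that a nonnegative $L^{\infty-}$-bounded $X$ satisfies $P[X\ge \rt^{-a}]\le C_m\rt^{am}$ for every $a>0$ and $m\ge1$.

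\emph{Step 1: $\{\hu\}_T$ is $L^{\infty-}$-bounded.} Apply the basic inequality $\mathbb{Q}_T(\hu)\le\mathbb{Q}_T(u^\sharp)$ with the reference point $u^\sharp=(\tu_{\ja},0)$; since $\theta^*+\rt u^\sharp=(\ttheta_{\ja},0)$ and $\ttheta_{\jb}=\rt\tu_{\jb}$, this rearranges—after discarding the nonnegative penalty of $\htheta^{(q)}$ on $\jb$—to
\[
\hG[(\hu-\tu)^{\otimes2}]\ \le\ \tu_{\jb}'\hG_{\jbb}\tu_{\jb}+\rti^2\sum_{j\le\sfp^0}\kappa_T^j\bigl(|\ttheta_j|^q-|\htheta^{(q)}_j|^q\bigr).
\]
On the event $G_T=\{\,|\ttheta_j|\wedge|\htheta^{(q)}_j|\ge|\theta^*_j|/2\text{ for all }j\le\sfp^0\,\}$ the first $\sfp^0$ coordinates stay away from the origin, so by the mean value theorem $|\ttheta_j|^q-|\htheta^{(q)}_j|^q\le q(|\theta^*_j|/2)^{q-1}\rt\,|\hu_j-\tu_j|$; the surplus factor $\rt$ cancels one power of $\rti$, and since $\rti\alpha_T=O(1)$ and $|\ttheta_j|^{-\gamma}$ is bounded on $G_T$, the whole sum is at most an $L^{\infty-}$-bounded multiple of $|\hu-\tu|$. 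Combined with $\hG[(\hu-\tu)^{\otimes2}]\ge\lambda_{\min}(\hG)|\hu-\tu|^2$ this is a quadratic inequality in $|\hu-\tu|$, giving $|\hu|\le$ (an $L^{\infty-}$-bounded variable) on $G_T$. Off $G_T$, either $|\ttheta_{j_0}|<|\theta^*_{j_0}|/2$ for some $j_0$—a rare event of probability $\le C_m\rt^m$, on which comparing $\mathbb{Q}_T(\hu)$ with a reference point that zeroes the affected $\ja$-coordinates and all $\jb$-coordinates bounds $|\hu|$ by $\rti$ times an $L^{\infty-}$-bounded variable—or $|\htheta^{(q)}_{j_0}|<|\theta^*_{j_0}|/2$ for some $j_0$, which forces $|\hu|\ge\rti c_0/2$. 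In the latter situation one uses the coarse subadditive bound $|\ttheta_j|^q-|\htheta^{(q)}_j|^q\le\rt^q|\hu_j-\tu_j|^q$, whose prefactor combines with $\rti^2\kappa_T^j$ only up to $O(\rt^{q-1})$; solving $|\hu-\tu|^2\le(L^{\infty-}\text{-bdd})+(L^{\infty-}\text{-bdd})\,\rt^{q-1}|\hu-\tu|^q$ gives $|\hu|\le\rt^{q-1}$ times an $L^{\infty-}$-bounded variable, and together with $|\hu|\ge\rti c_0/2$ this forces an $L^{\infty-}$-bounded variable to exceed $(c_0/2)\rt^{-q}$, an event of probability $\le C_m\rt^{qm}$. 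On each of these exceptional events a single Cauchy–Schwarz inequality—pairing $E[|\hu|^{2p}\mathbf 1]^{1/2}$, bounded by the estimates above, with the square root of the event's probability, and choosing $m$ large relative to $p$—keeps the contribution to $E[|\hu|^p]$ bounded uniformly in $T$. Hence $\sup_T E[|\hu|^p]<\infty$ for every $p\ge1$.

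\emph{Step 2: the selection rate.} Fix $j>\sfp^0$ and suppose $\htheta^{(q)}_j\neq0$. Comparing the value of $Q_T^{(q)}$ at $\htheta^{(q)}$ with its value at the same vector with the $j$-th coordinate reset to $0$, minimality gives, with $\delta=\htheta^{(q)}-\ttheta$ and $\eta=\htheta^{(q)}_j$,
\[
\kappa_T^j|\eta|^q\ \le\ -2\eta(\hG\delta)_j+\eta^2\hG_{jj}\ \le\ 2|\eta|\,|\hG|\,|\delta|+|\eta|^2|\hG|;
\]
dividing by $|\eta|^q$ and inserting $|\eta|=\rt|\hu_j|\le\rt|\hu|$ and $|\delta|=\rt|\hu-\tu|\le\rt(|\hu|+|\tu|)$,
\[
\kappa_T^j\ \le\ \rt^{2-q}\,V,\qquad V:=|\hG|\bigl(2|\hu|^{1-q}(|\hu|+|\tu|)+|\hu|^{2-q}\bigr),
\]
with $V$ $L^{\infty-}$-bounded by Step 1. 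Since $\theta^*_j=0$ gives $\ttheta_j=\rt\tu_j$, i.e. $\kappa_T^j=\alpha_T\rt^{-\gamma}|\tu_j|^{-\gamma}$, and $\alpha_T\ge C_0^{-1}\rt^{1+\gamma-\epsilon}$ (equivalent to $\rt^{1+\gamma-\epsilon}\alpha_T^{-1}=O(1)$), we obtain
\[
\{\htheta^{(q)}_j\neq0\}\ \subseteq\ \bigl\{\,|\tu_j|^{-\gamma}\le C\,\rt^{\beta}\,V\,\bigr\},\qquad \beta:=1+\epsilon-q>0
\]
(the positivity uses $\epsilon>-1+q$). For $\gamma\ge0$ the right-hand event says $|\tu_j|$ or $V$ is atypically large, so Markov's inequality together with the $L^{\infty-}$-boundedness of $\tu_j$ and $V$ gives $P[\htheta^{(q)}_j\neq0]\le C_p\rt^{\beta p}$ for every $p$; taking $p\ge 2L/\beta$ and a union bound over the finitely many $j>\sfp^0$ yields $P[\htheta^{(q)}_{\jb}=0]\ge1-C_L\rt^{2L}$. (For $\gamma<0$ the same inclusion instead forces $|\tu_j|$ to be atypically small, which is controlled once $P[|\tu_j|\le\delta]$ is known to be polynomially small in $\delta$, as holds under the mixed normality of Assumption \ref{ass:asynorm}.)

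\emph{Main difficulty.} The hard part is Step 1 off $G_T$: the plain basic inequality yields there only $|\hu|\le\rt^{q-1}$ times an $L^{\infty-}$-bounded variable, which is \emph{not} uniformly bounded, and one cannot close the moment estimate by a naive Cauchy–Schwarz against $P[G_T^c]$ without circularity. The device is to first convert "off $G_T$" into the quantitative alternative that either a genuinely rare event of probability $O(\rt^m)$ occurs or $|\hu|\ge\rti c_0/2$, in which latter case the coarse bound forces an $L^{\infty-}$-bounded variable above $\rt^{-q}$; only after this reduction is Cauchy–Schwarz legitimate. The remaining work is the routine but somewhat intricate bookkeeping of the data-dependent weights $\kappa_T^j=\alpha_T|\ttheta_j|^{-\gamma}$, via Assumption \ref{ass:Lpbdd} and the fact that $|\ttheta_j|$ stays bounded below for $j\le\sfp^0$ outside a super-polynomially small event—the behaviour of $\hu$ on that last event, near a singular value of a weight, being the one place where a little regularity of the law of $\ttheta$ (or of the associated statistical random field) may be needed.
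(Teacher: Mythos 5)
Your argument is correct, and for the selection-rate half it is essentially the paper's own: the paper likewise reduces $\{\htheta_j^{(q)}\neq 0\}$ for $j>\sfp^0$ to an inequality of the form $q\,\rt^{-(2-q)}\kappa_T^j\le 2\,|\hG^{(j)}\{\rti(\htheta^{(q)}-\ttheta)\}|\,|\rti\htheta_j^{(q)}|^{1-q}$ (it uses the first-order condition at the nonzero minimizer where you compare the objective values with the $j$-th coordinate zeroed out --- an immaterial difference), then applies Markov's inequality with the exponent $M=2L(1-q+\epsilon)^{-1}$, H\"older's inequality, and the identity $\ttheta_j=\rt\tu_j$ for $j>\sfp^0$ together with $\rt^{1+\gamma-\epsilon}\alpha_T^{-1}=O(1)$, exactly as you do; the paper's final factor $E[|\tu|^{4\gamma M}]$ carries the same implicit restriction to $\gamma\ge 0$ that you flag at the end. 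Where you genuinely diverge is the moment bound for $\hu$. The paper obtains it in one line from the inequality (\ref{bdofht}) already derived in the proof of Theorem \ref{thm:rootn} by comparing $Q_T^{(q)}$ at $\htheta^{(q)}$ and at $\theta^*$, namely $|\hu|\le\|\hG^{-1}\|\,(2\|\hG\|\,|\tu|+\sfp^0 K^*\rti a_T)$, and then invokes Assumption \ref{ass:Lpbdd}; note this silently requires $L^{\infty-}$-boundedness of $\rti a_T=\rti\alpha_T\max_{j\le\sfp^0}|\ttheta_j|^{-\gamma}$, i.e.\ control of negative moments of the active $\ttheta_j$ when $\gamma>0$. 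Your comparison with the reference point $(\ttheta_{\ja},0)$ combined with the decomposition along the event $G_T$ is longer but addresses precisely that point: it trades the paper's global bound $|\theta^*_j|^q-|\htheta_j^{(q)}|^q\le K^*|\htheta_j^{(q)}-\theta^*_j|$ for a localized mean-value bound on which the random weights are provably bounded, and pays for the exceptional events with super-polynomial probability estimates and Cauchy--Schwarz. Both routes close; the paper's is shorter because it recycles (\ref{bdofht}), while yours is more self-contained and more explicit about the behaviour of the weights $\kappa_T^j=\alpha_T|\ttheta_j|^{-\gamma}$ near their singularity.
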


\section{P-O estimator}
We now consider the coefficient matrix $\hG$. In the above theorems, we assume convergence of $\hG$ to $G$ or $L^{\infty-}$-boundedness of $\{ \hG \}$ and $\{ \hG^{-1} \}$ but we should not necessarily find such coefficient matrix $\hG$. In fact, if we take $\hG = I_{\sfp}$, then we can prove Theorems 1-4 in the same way as Section 5 except that the conditional asymptotic variance in Theorem 3 becomes $(\Gamma^{-1})_{\jaa}$. Since $(\Gamma_{\jaa})^{-1} = (\Gamma^{-1})_{\jaa} - (\Gamma^{-1})_{\jab} ( (\Gamma^{-1})_{\jbb} )^{-1} (\Gamma^{-1})_{\jba}$, this estimator is not efficient. 
However, the objective function has following simple form
\[
	Q_T^{(q)}(\theta) = \sum_{j=1}^{\sfp}\Bigl( (\theta_j-\ttheta_j)^2 + \kappa_T^j |\theta_j|^q \Bigr).
\]
From a computational point of view, this fact is useful because it is difficult to optimize the non-convex function in the high-dimensional case. Then we calculate the new estimator under the model selected by the penalized LSA estimator with coefficient matrix $I_{\sfp}$. We call this estimator the P-O (penalized method to ordinary method) estimator and denote it by $\ctheta$. More precisely, we define the P-O estimator as follows. \par
Let $\Theta$ is a bounded open subset of $\mathbb{R}^{\sfp}$. First, we assume the $\rti$-consistency of the initial estimator $\ttheta$. Second, we get the penalized LSA estimator $\htheta^{(q)}_{I_{\sfp}}$ with coefficient matrix $I_{\sfp}$ defined by
\[
	\htheta^{(q)}_{I_{\sfp}} \in  \underset{\theta \in \Theta}{\text{argmin}} \sum_{j=1}^{\sfp}\Bigl( (\theta_j-\ttheta_j)^2 + \kappa_T^j |\theta_j|^q \Bigr),
\]
where $\kappa_T^j = \alpha_T |\ttheta_j|^{-\gamma}$.
Let $\hat{\jb} = \{ j=1,...,\sfp; \htheta^{(q)}_{I_{\sfp},j} = 0 \}$ and $\hat{\Theta} = \{ \theta \in \Theta; \theta_j = 0, j \in \hat{\jb} \}$. Here, we consider another loss function $\mathbb{L}_T(\theta)$. Then, we define the P-O estimator $\ctheta$ by
\[
	\ctheta \in \underset{\theta \in \hat{\Theta}}{\text{argmin }}  \mathbb{L}_T(\theta).
\]

Before we turn to the statement of results for the P-O estimator $\ctheta$, we consider some conditions. We denote a parameter $\theta = \begin{bmatrix} \phi \\ \psi \end{bmatrix} \in \mathbb{R}^{\sfp^0 + (\sfp-\sfp^0)}$ and its true value $\theta^* = \begin{bmatrix} \phi^* \\ \psi^* \end{bmatrix} = \begin{bmatrix} \phi^* \\ 0 \end{bmatrix}$. Let $\bar{\mathbb{L}}_T(\phi) = \mathbb{L}_T \Bigl( \begin{bmatrix} \phi \\ 0 \end{bmatrix} \Bigr)$ and $\bar{\phi} \in {\rm argmin}_{\phi} \bar{\mathbb{L}}_T(\phi)$.
\begin{assumption}\label{ass:3step}\
	\begin{enumerate}
	\renewcommand{\labelenumi}{(\roman{enumi})}
		\item $\{ \tu \}_T = \{ \rti(\ttheta-\theta^*) \}_T$ is $L^{\infty-}$-bounded.
		\item $\rti(\bar{\phi} - \phi^*) \rightarrow^{d_s} \Lambda^{-\frac12} \eta$, where $\Lambda$ is a $\sfp^0 \times \sfp^0$ positive definite symmetric random matrix, $\eta$ is a $\sfp^0$-dimensional standard Gaussian random vector independent of $\Lambda$.
		\item $\{ \rti(\bar{\phi} - \phi^*) \}_T$ is $L^{\infty-}$-bounded. \\
	\end{enumerate}
\end{assumption}

\begin{remark}
	In many cases, we take $\mathbb{L}_T(\theta) = \mathcal{L}_T(\theta)$ and $\Lambda=\Gamma_{\jaa}$. Then, we consider the sufficient condition for Assumption \ref{ass:3step}. We define the random field $\mathbb{Z}_T : \mathbb{U}_T \rightarrow \mathbb{R}_+$ by $\mathbb{Z}_T(u) = \exp \{ -\mathcal{L}_T(\theta^*+\rt u) + \mathcal{L}_T(\theta^*) \}$, where $\mathbb{U}_T = \{ u \in \mathbb{R}^{\sfp} ; \theta^* + \rt u \in \Theta \}$. We denote $B(R) = \{ u \in \mathbb{R}^{\sfp}; |u| \le R \}$. If $\mathbb{Z}_T(u) \rightarrow^{d_s} \mathbb{Z}(u)$ in $C(B(R))$ for every $R > 0$ as $T \rightarrow \infty$ and the initial estimator $\ttheta$ satisfies Assumption \ref{ass:rootn}, then  Assumption \ref{ass:3step} (ii) holds. Here, $\mathbb{Z}$ is a random field defined by $\mathbb{Z}(u) = \exp \Bigl( u' \Gamma^{\frac12} \zeta - \frac12 u' \Gamma u \Bigr)$.
	Moreover, if the random field $\mathbb{Z}_T$ satisfies polynomial type large deviation inequality (Theorem 1 in \cite{yoshida2011polynomial}), then Assumption \ref{ass:3step} (iii) holds.
\end{remark}

\begin{theorem}\label{thm:3step}
	\begin{enumerate}
	\renewcommand{\labelenumi}{(\alph{enumi})}
	\item Under Assumptions \ref{ass:asynorm}, \ref{ass:3step}(i) and (ii), we have
	\[
		\rti (\ctheta - \theta^*)_{\ja} - \rti(\bar{\phi}-\phi^*) \rightarrow^p 0.
	\]
In particular, we have
	\[
		\rti(\ctheta-\theta^*)_{\ja} \rightarrow^{d_s} \Lambda^{-\frac12} \eta \sim {\rm MN}_{\sfp^0}(0, \Lambda^{-1}).
	\]
	\item Let $\epsilon \in \bigl( -1+q, \gamma \bigr)$.
	We assume Assumptions \ref{ass:3step}(i) and (iii), $\rt^{1+\gamma-\epsilon} \alpha_T^{-1} = O(1)$ and $\rti \alpha_T = O(1)$.
	Then, we have $L^{\infty-}$-boundedness of $\{ \rti(\ctheta-\theta^*) \}_T$. Moreover, for all $L > 0$ there exists a constant $C_L$ such that 
	\begin{align} \label{cthetarate}
		P[\ctheta_{\jb} = 0] \ge 1 - C_L \rt^{2L}
	\end{align}
	for all $T>0$.
	\end{enumerate}
\end{theorem}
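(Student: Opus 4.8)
The plan is to reduce both assertions to the properties of the penalized LSA estimator $\htheta^{(q)}_{I_{\sfp}}$ established in Theorems \ref{thm:select} and \ref{thm:rate}, applied with the identity coefficient matrix. For part (a), I would first invoke the version of Theorem \ref{thm:select} for $\hG = I_{\sfp}$ (which holds as noted at the start of Section 4, since $I_{\sfp}$ trivially satisfies Assumption \ref{ass:cons_of_G} with $G = I_{\sfp}$, and the rate conditions on $\alpha_T$ translate into the hypotheses needed): under the stated growth conditions on $\alpha_T$ we get $P[\hat{\jb} \supseteq \jb] \to 1$, i.e. $P[\hat{\Theta} \supseteq \{\theta : \theta_{\jb}=0\}]\to 1$. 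On that event the constrained minimization defining $\ctheta$ contains the point $\theta = \begin{bmatrix}\bar\phi \\ 0\end{bmatrix}$ in its feasible set; more precisely, on the event $\{\hat{\jb}=\jb\}$ (whose probability also tends to $1$, using $\rti a_T = o_p(1)$ via the $\rti$-consistency part to rule out that true nonzero coordinates are killed — this is where Assumption \ref{ass:3step}(i) feeds in) the constrained problem $\mathrm{argmin}_{\theta\in\hat\Theta}\mathbb{L}_T(\theta)$ coincides exactly with $\mathrm{argmin}_\phi \bar{\mathbb{L}}_T(\phi)$ in the first $\sfp^0$ coordinates, so $\ctheta_{\ja} = \bar\phi$ and $\ctheta_{\jb}=0$. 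Hence $\rti(\ctheta-\theta^*)_{\ja} - \rti(\bar\phi-\phi^*) = 0$ on an event of probability tending to $1$, which gives the stated $\rightarrow^p 0$; the stable convergence then follows immediately from Assumption \ref{ass:3step}(ii).

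For part (b), the $L^{\infty-}$-boundedness of $\{\rti(\ctheta-\theta^*)\}_T$ is obtained by splitting on the event $E_T = \{\hat{\jb}=\jb\}$ and its complement. On $E_T$, $\rti(\ctheta-\theta^*) = \begin{bmatrix}\rti(\bar\phi-\phi^*)\\ 0\end{bmatrix}$, so all moments are controlled by Assumption \ref{ass:3step}(iii). On $E_T^c$, $\ctheta$ still lies in the bounded set $\Theta$, so $|\rti(\ctheta-\theta^*)| \le \rti \cdot \mathrm{diam}(\Theta)$, and it suffices that $P[E_T^c]$ decays faster than any power of $\rt$; that is precisely the content of Theorem \ref{thm:rate} applied with $\hG=I_{\sfp}$ (again legitimate since $I_{\sfp}$ and $I_{\sfp}^{-1}$ are trivially $L^{\infty-}$-bounded, and Assumption \ref{ass:3step}(i) supplies the $L^{\infty-}$-boundedness of $\{\tu\}_T$ required there), which yields $P[\htheta^{(q)}_{I_{\sfp},\jb}=0]\ge 1 - C_L\rt^{2L}$ for every $L$. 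Combining the one-sided inclusion with the complementary estimate controlling the nonzero coordinates (the $\rti$-consistency in Theorem \ref{thm:rootn}, sharpened to a polynomial-rate deviation bound exactly as in the proof of Theorem \ref{thm:rate}) gives $P[E_T^c] \le C_L'\rt^{2L}$, hence $P[\ctheta_{\jb}=0]\ge P[E_T]\ge 1 - C_L'\rt^{2L}$, which is \eqref{cthetarate}; and the moment bound on $E_T^c$ is then negligible to all polynomial orders, finishing the $L^{\infty-}$ claim.

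The one subtlety worth flagging, and the step I expect to be the main obstacle, is the two-sided control of $\hat{\jb}$: Theorems \ref{thm:select} and \ref{thm:rate} are stated as $P[\htheta^{(q)}_{\jb}=0]\to 1$, i.e. they guarantee that the estimated zero set contains the true zero set with high (polynomially-controlled) probability, but for the identification $\ctheta_{\ja}=\bar\phi$ I also need that no truly nonzero coordinate is mistakenly zeroed out, i.e. $P[\hat{\jb}\cap\ja \ne \emptyset]$ is small. This is not literally one of the displayed conclusions, so I would extract it from the $\rti$-consistency argument: since $\rti(\htheta^{(q)}_{I_{\sfp}} - \theta^*) = O_p(1)$ (Theorem \ref{thm:rootn}) and $\theta^*_j \ne 0$ for $j\in\ja$, we have $\htheta^{(q)}_{I_{\sfp},j} \to^p \theta^*_j \ne 0$, so $P[j\in\hat{\jb}] \to 0$; for part (b) I would instead run the polynomial-type large deviation bound on $|\rti(\htheta^{(q)}_{I_{\sfp}}-\theta^*)|$ that underlies Theorem \ref{thm:rate} to get $P[\htheta^{(q)}_{I_{\sfp},j}=0, \ \theta^*_j\ne 0] \le C_L\rt^{2L}$. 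Once this refinement is in place, everything else is bookkeeping with the union bound and the moment splitting described above.
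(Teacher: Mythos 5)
Your proof follows essentially the same route as the paper's: split on the event $B_T=\{\hat{\jb}=\jb\}$, identify $\ctheta_{\ja}$ with $\bar{\phi}$ and $\ctheta_{\jb}$ with $0$ on that event, bound the discrepancy by $2\rti\,\mathrm{diam}(\Theta)$ on $B_T^c$, and control $P[B_T^c]$ to all polynomial orders via Theorem \ref{thm:rate} applied with $\hG=I_{\sfp}$. The ``subtlety'' you flag --- that Theorem \ref{thm:rate} as displayed only gives the one-sided inclusion $\jb\subseteq\hat{\jb}$, and the reverse inclusion must be extracted from the $L^{\infty-}$-boundedness of $\hu$ (equivalently, a polynomial deviation bound on $\rti(\htheta^{(q)}_{I_{\sfp}}-\theta^*)$) --- is genuine and is left implicit in the paper's one-line assertion that ``$P[B_T^c]$ is evaluated by any power of $\rt$,'' so your explicit treatment of it fills in a detail rather than departing from the argument.
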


\section{Proofs}

\subsection*{Proof of Theorem \ref{thm:rootn}}
Since $\htheta^{(q)}$ minimizes $Q_T^{(q)}(\theta)$, we obtain
\begin{align}
	0 
	&\ge Q_T^{(q)}(\htheta^{(q)}) - Q_T^{(q)}(\theta^*) \notag \\ 
	&= \hG [(\htheta^{(q)} - \ttheta)^{\otimes 2}]
		+ \sum_{j=1}^{\sfp} \kappa_T^j |\htheta_j^{(q)}|^q 
		- \hG [(\theta^*  - \ttheta)^{\otimes 2}]
		- \sum_{j=1}^{\sfp} \kappa_T^j |\theta^*_j|^q \notag \\
	&= \hG [(\htheta^{(q)} - \theta^*)^{\otimes 2}]
		+ 2 (\htheta^{(q)} - \theta^*)' \hG (\theta^* - \ttheta)
		+ \sum_{j=1}^{\sfp} \kappa_T^j |\htheta_j^{(q)}|^q 
		-  \sum_{j=1}^{\sfp} \kappa_T^j |\theta^*_j|^q. \label{proof11}
\end{align}
Since $0 \le |\htheta^{(q)}_j| < |\theta^*|$ implies $(|\theta^*_j|^q - |\htheta^{(q)})j|^q)/(|\theta^*_j|-|\htheta^{(q)}_j|) \ge |\theta^*_j|^q/|\theta^*_j| = |\theta^*_j|^{q-1}$, we obtain $|\htheta^{(q)}_j|^q - |\theta^*_j|^q \ge -K^* |\htheta^{(q)}_j - \theta^*_j|$ where $K^* = \max_{1 \le j \le \sfp^0} |\theta^*_j|^{q-1}$. Thus 
\begin{align*}
	\sum_{j=1}^{\sfp} \kappa_T^j |\htheta_j^{(q)}|^q - \sum_{j=1}^{\sfp} \kappa_T^j |\theta^*_j|^q
	&\ge \sum_{j=1}^{\sfp^0} \kappa_T^j (|\htheta_j^{(q)}|^q - |\theta^*_j|^q) \\
	&\ge -\sum_{j=1}^{\sfp^0} K^* \kappa_T^j |\htheta_j^{(q)} - \theta^*_j| \\
	&\ge - \sfp^0 K^* a_T |\htheta^{(q)} - \theta^*|.
\end{align*}
Therefore, by multiplying both sides of (\ref{proof11}) by $\rt^{-2}$, we obtain
\begin{align*}
	0 &\ge \hG \bigl[ \{ \rti (\htheta^{(q)} - \theta^*) \}^{\otimes 2} \bigr] +2\{ \rti (\htheta^{(q)} - \theta^*) \}' \hG \{ \rti (\theta^* - \ttheta) \} - \sfp^0 K^* \rti a_T |\rti (\htheta^{(q)} - \theta^*)| \\
	&\ge \|\hG^{-1}\|^{-1} |\rti (\htheta^{(q)} - \theta^*)|^2 - 2\|\hG\| \cdot |\rti (\htheta^{(q)} - \theta^*)||\rti (\ttheta-\theta^*)| - \sfp^0 K^* \rti a_T |\rti (\htheta^{(q)} - \theta^*)|.
\end{align*}
After all,
\begin{align} \label{bdofht}
	|\rti (\htheta^{(q)} - \theta^*)|
	\le \Biggl\{ \|\hG^{-1}\| \Bigl( 2\|\hG\| \cdot |\rti (\ttheta-\theta^*)| + \sfp^0 K^* \rti a_T \Bigr) \Biggr\}.
\end{align}
Since the right hand side is $O_p(1)$ by the assumption, we obtain $\rti (\htheta^{(q)} - \theta^*) = O_p(1)$. \qed

\subsection*{Proof of Theorem \ref{thm:select}}
For some $j (\sfp^0 < j \le \sfp)$, we assume $\htheta_j^{(q)} \ne 0$. Since $Q_T^{(q)}(\theta)$ is differentiable at $\theta = \htheta^{(q)}$ with respect to the $j$-th component and $\htheta^{(q)}$ minimizes $Q_T^{(q)}(\theta)$, 
\[
	0 
	= \rti \frac{\partial{Q_T^{(q)}(\theta)}}{\partial{\theta_j}} |_{\theta=\htheta^{(q)}} 
	= 2 \hG^{(j)} \{ \rti (\htheta^{(q)} - \ttheta) \} + \rti \kappa_T^j q |\htheta_j^{(q)}|^{q-1} {\rm sgn} (\htheta_j^{(q)}),
\]
where $\hG^{(j)}$ means the $j$-th row vector of $\hG$. Therefore, we have 
\[
	2|\hG^{(j)} \{ \rti (\htheta^{(q)} - \ttheta) \} ||\rti \htheta_j^{(q)}|^{1-q} 
	= q \rt^{-(2-q)}  \kappa_T^j
	\ge q \rt^{-(2-q)} b_T.
\]
Since, by Theorem \ref{thm:rootn} and the assumption, the left hand side of above equation is $O_p(1)$ and $\rt^{-(2-q)} b_T \rightarrow^p \infty$, we obtain
\begin{align} \label{proofofthm2}
	P\Bigl[ \htheta_j^{(q)} \ne 0 \Bigr] 
	\le P\Bigl[ |2\hG^{(j)} \{ \rti (\htheta^{(q)} - \ttheta) \} ||\rti \htheta_j^{(q)}|^{1-q} \ge q \rt^{-(2-q)} b_T \Bigr] \rightarrow 0
\end{align}
for $j = \sfp^0+1,...,\sfp$.
\qed

\subsection*{Proof of Theorem \ref{thm:asynorm} }
For $\theta = \begin{bmatrix} \theta_{\ja} \\ \theta_{\jb} \end{bmatrix} \in \mathbb{R}^{\sfp}$,
\begin{align*}
	Q_T^{(q)}(\theta) 
	&= \hG [(\theta - \ttheta)^{\otimes 2}] + \sum_{j=1}^{\sfp} \kappa_T^j |\theta_j|^q \\
	&= \hG_{\jaa} [(\theta - \ttheta)_{\ja}^{\otimes 2}]
	+ 2(\theta - \ttheta)_{\ja}' \hG_{\jab} (\theta - \ttheta)_{\jb} + \hG_{\jbb} [(\theta - \ttheta)_{\jb}^{\otimes 2}] \\
	& ~~~~~~~~~~~~~~~~ + \sum_{j=1}^{\sfp^0} \kappa_T^j |\theta_j|^q + \sum_{j=\sfp^0+1}^{\sfp} \kappa_T^j |\theta_j|^q.
\end{align*}
In particular, for $\theta^{\ddagger} = \begin{bmatrix} \theta_{\ja} \\ 0 \end{bmatrix} \in \mathbb{R}^{\sfp}$,
\[
	Q_T^{(q)}(\theta^{\ddagger}) 
	= \hG_{\jaa} [(\theta - \ttheta)_{\ja}^{\otimes 2}] - 2(\theta - \ttheta)_{\ja}' \hG_{\jab} \ttheta_{\jb} + \hG_{\jbb} [\ttheta_{\jb}^{\otimes 2}] + \sum_{j=1}^{\sfp^0} \kappa_T^j |\theta_j|^q.
\]
Let
\begin{align*}
	A_T = \Bigr\{ \underset{1 \le j \le \sfp^0}{{\rm min}} |\htheta_j^{(q)}| > 0, \htheta_{\jb}^{(q)} = 0, \det(\hG_{\jaa}) \ne 0 \Bigr\}.
\end{align*}
Then Theorems \ref{thm:rootn} and \ref{thm:select} imply $P[A_T] \rightarrow 1$. Let $\mathbb{R}^{\sfp}_0 = \{ \theta \in \mathbb{R}^{\sfp}; \theta_{\jb} = 0 \}$. Since $Q_T^{(q)}(\htheta^{(q)}) = \underset{\theta^{\ddagger} \in \mathbb{R}^{\sfp}_0}{\rm min} Q_T^{(q)}(\theta^{\ddagger})$ on $A_T$, 
\begin{align*}
	0 
	&= \frac12 \frac{\partial{Q_T^{(q)}(\theta)}}{\partial{\theta_{\ja}}} \Bigl|_{\theta = \htheta^{(q)}} \\
	&= \hG_{\jaa}(\htheta^{(q)} - \ttheta)_{\ja} - \hG_{\jab} \ttheta_{\jb} + V(\htheta_{\ja}^{(q)})
\end{align*}
holds on $A_T$, where $V(\htheta_{\ja}^{(q)}) = \bigl[ 2^{-1} q\kappa_T^j |\htheta_j^{(q)}|^{q-1}{\rm sgn}(\htheta_j^{(q)}) \bigr]_{j=1,...,\sfp^0} \in \mathbb{R}^{\sfp^0}$. Let $\hat{\mathfrak{G}} = \begin{bmatrix} I_{\sfp^0} & (\hG_{\jaa})^{-1} \hG_{\jab} \end{bmatrix}$. Since $\hat{\mathfrak{G}} \rightarrow^p \mathfrak{G}$ and $1_{A_T} \{ \rti (\hG_{\jaa})^{-1} V(\htheta^{(q)}_{\ja}) \} \rightarrow^p 0$, we have 
\begin{align*}
	&\rti( \htheta^{(q)} - \theta^* )_{\ja} - \mathfrak{G} \{ \rti(\ttheta-\theta^*) \} \\
	&= 1_{A_T} \Bigl\{ \rti(\ttheta-\theta^*)_{\ja} + \rti(\hG_{\jaa})^{-1}\hG_{\jab}\ttheta_{\jb} - \rti(\hG_{\jaa})^{-1}V(\htheta^{(q)}_{\ja}) - \mathfrak{G} \{ \rti(\ttheta-\theta^*) \} \Bigr\} \\
	&\hspace{100pt} + 1_{A_T^c} \Bigl\{ \rti( \htheta^{(q)} - \theta^* )_{\ja} - \mathfrak{G} \{ \rti(\ttheta-\theta^*) \} \Bigr\} \\
	&= 1_{A_T} \Bigl\{ (\hat{\mathfrak{G}}-\mathfrak{G}) \{ \rti(\ttheta-\theta^*) \} - \rti(\hG_{\jaa})^{-1}V(\htheta^{(q)}_{\ja}) \Bigr\} + 1_{A_T^c} \Bigl\{ \rti( \htheta^{(q)} - \theta^* )_{\ja} - \mathfrak{G} \{ \rti(\ttheta-\theta^*) \} \Bigr\} \\
	&\rightarrow^p 0.
\end{align*}
\qed

\subsection*{Proof of Theorem \ref{thm:rate}}
By (\ref{bdofht}) and Assumption \ref{ass:Lpbdd}, $\{ \hu \}$ is $L^{\infty-}$-bounded. \par
For $j > \sfp^0$, by the inequality in (\ref{proofofthm2}) and the Markov's inequality, we have
\begin{align*}
	P\Bigl[ \htheta_j^{(q)} \ne 0 \Bigr]
	&\le P\Bigl[ 2|\hG^{(j)}|\cdot|\hu - \tu| \ge \rti \kappa_T^j q \bigl|\rt \hu \bigr|^{-(1-q)} \Bigr] \\
	&\le \frac{1}{\rt^{-(1-q+\epsilon)M}}2^M q^{-M} E\Biggl[ |\hG^{(j)}|^M|\hu - \tu|^M |\hu|^{M(1-q)} \Bigl( \frac{1}{\rt^{\epsilon-1} \kappa_T^j } \Bigr)^M \Biggr],
\end{align*}
where $M = M(L) = 2L(1-q+\epsilon)^{-1}$. Here, by H\"older inequality, we have
\begin{align} 
	&E\Biggl[ |\hG^{(j)}|^M|\hu - \tu|^M |\hu|^{M(1-q)} \Bigl( \frac{1}{\rt^{\epsilon-1} \kappa_T^j } \Bigr)^M \Biggr] \notag \\
	&\le E\Bigl[ |\hG^{(j)}|^{4M} \Bigr]^{\frac14} E\Bigl[ |\hu - \tu|^{4M} \Bigr]^{\frac14} E\Bigl[ |\hu|^{4M(1-q)} \Bigr]^{\frac14} E\Biggl[ \Bigl( \frac{1}{\rt^{\epsilon-1} \kappa_T^j } \Bigr)^{4M} \Biggr]^{\frac14}. \label{proof42}
\end{align}
Since
\begin{align*}
	E\Biggl[ \Bigl( \frac{1}{\rt^{\epsilon-1} \kappa_T^j } \Bigr)^{4M} \Biggr]
	&= E\Biggl[ \Bigl( \frac{|\ttheta_j|^{\gamma}}{\rt^{\epsilon-1} \alpha_T} \Bigr)^{4M} \Biggr] \\
	&\le \Biggl( \frac{1}{\rt^{-(1+\gamma-\epsilon)} \alpha_T} \Biggr)^M E\Bigl[ |\tu|^{4\gamma M} \Bigr]
\end{align*}
and $\{ \hu \}_T$,$\{ \tu \}_T$ and $\{ \hG \}$ are $L^{\infty-}$-bounded, the right-hand side of (\ref{proof42}) is bounded uniformly in $T$. This completes the proof.
\qed

\subsection*{Proof of Theorem \ref{thm:3step}}
Let $B_T = \{ \hat{\jb} = \{ \sfp^0+1,...,\sfp \} \}$. By Theorem \ref{thm:rate}, $P[B_T^c]$ is evaluated by any power of $\rt$. Therefore, (a) is obtained by
\begin{align*}
	|\rti(\ctheta-\theta^*)_{\ja} - \rti(\bar{\phi}-\phi^*)| \le 1_{B_T^c} \cdot 2\rti {\rm diam}(\Theta) \rightarrow^p 0,
\end{align*}
where ${\rm diam}(\Theta) = \sup\{ |\theta_1-\theta_2|; \theta_1,\theta_2 \in \Theta \}$. \\
(b) Similarly, since
\begin{align*}
	&\sup_T E[ | \rti(\ctheta-\theta^*) |^p ] \\
	&\le \sup_T E[ | \rti(\bar{\phi}-\phi^*) |^p ] + \sup_T \Bigl\{ P[B_T^c] \cdot (2\rti {\rm diam}(\Theta))^p \Bigr\} < \infty
\end{align*}
for all $p > 0$, we have $L^{\infty-}$-boundedness of $\{ \rti(\ctheta-\theta^*) \}_T$. By the definition of $\ctheta$, $\ctheta_{\jb} = 0$ is equivalent to $\htheta^{(q)}_{I_{\sfp},\jb} = 0$. Therefore we obtain the inequality (\ref{cthetarate}).
\qed

\section{Point processes}
\subsection{Ergodic intensity model}\label{290120-1}
In this section, we will apply the results in Section 3 to a point process with parameters containing zero components. We consider a multivariate point process $N=(N_t^\alpha)_{\alpha \in {\bf I}{, t\in\bbR_+}}$ with intensity process $\lambda(t,\theta)=(\lambda^{\alpha}(t,\theta))_{\alpha \in {\bf I}}$, $t\in{ \bbR_+}$, where ${\bf I} = \{1,2,...,\sfd\}$ is an index set. More precisely, given a stochastic basis $\mathcal{B} = (\Omega, \mathcal{F}, {\bf F}, P)$ with a filtration ${\bf F} = (\mathcal{F}_t)_{t \in \mathbb{R}_+}$, we suppose that $N$ and $\lambda(\cdot,\theta)$ are defined on $\calb$, the simple counting process $N$ is ${\bf F}$-adapted right-continuous, $\lambda(\cdot,\theta)$ is predictable locally integrable for every $\theta\in\Theta$, and that $N-\int_0^\cdot\lambda(s,\theta^*)ds$ is a $\sfd$-dimensional local martingale with respect to ${\bf F}$. Assume that the components of $N$ have no common jumps. The parameter space $\Theta$ is a bounded open set in $\bbR^\sfp$ that admits Sobolev's inequality 
\beas \label{sobolev}
\|f\|_\infty &\leq& C_\Theta\sum_{i=0,1}\|\partial_\theta^i f\|_{L^r(\Theta)}
\eeas
for elements $f$ of the Sobolev space $f\in W^{1,r}(\Theta)$, 
with a constant $C_\Theta$ independent of $f$, for $r>\sfp$. 
We suppose that $0\in\bbR^\sfp$ is in $\Theta$ and that 
the mapping 
$\theta\mapsto\lambda(t,\theta)$ is continuously extended to $\bar{\Theta}$. 

We will use the quasi likelihood method (\cite{clinet2017statistical})
with 
the quasi-log likelihood function
\begin{align} \label{qllfofpp}
	\ell_T(\theta) = \sum_{\alpha\in {\bf I}} \int_0^T \log{(\lambda^\alpha (t,\theta)} )dN^\alpha_t - \sum_{\alpha \in {\bf I}} \int_0^T \lambda^\alpha (t,\theta) dt.
\end{align}
Then $\mathcal{L}_T(\theta) = -\ell_T(\theta)$ becomes a loss function. 
The conditions stated later ensure the existence of the function (\ref{qllfofpp}).
For the initial estimator $\ttheta$, we can use, for example, 
the quasi maximum likelihood estimator $\ttheta^M$ and 
the quasi Bayesian estimator $\ttheta^B$ given by 
\beas 
\ttheta^M &\in& \underset{\theta\in\bar{\Theta}}{\text{argmax}} \>\ell_T(\theta)
\eeas
and 
\beas 
\ttheta^B &=& 
\left[\int_\Theta \exp(\ell_T(\theta))\pi(\theta)d\theta\right]^{-1}
\int_\Theta \theta\exp(\ell_T(\theta))\pi(\theta)d\theta,
\eeas
respectively, where $\pi$ is a prior density satisfying 
$0<\inf_\theta\pi(\theta)\leq\sup_\theta\pi(\theta)<\infty$. 

For ergodic point processes, asymptotic normality and convergence of moments of $\ttheta^M$ and $\ttheta^B$ were proved in \cite{clinet2017statistical}. 
We recall their results briefly. 
Hereafter $\theta^*\in\Theta$ denotes the true value of $\theta$ and 
the distribution of the data is expressed by a multivariate point process $N$ 
with intensity process $\lambda(t,\theta^*)$. 

\begin{assumption}\label{ass:b1}
The mapping $\lambda:\Omega\times\bbR_+\times\Theta\to\bbR_+^\sfd$ 
is $\calf\times\bbB(\bbR_+)\times\bbB(\Theta)$-measurable and almost surely satisfies 
\bd
\im[(i)] for every $\theta\in\Theta$, the mapping $s\mapsto\lambda(s,\theta)$ is left continuous,
\im[(ii)] for every $s\in\bbR_+$, the mapping $\theta\mapsto\lambda(s,\theta)$ 
is in $C^4(\Theta)$ and admits a continuous extension to $\bar{\Theta}$. 
\ed
\end{assumption}

\begin{assumption}\label{ass:b2}
{\bf (i)} $\displaystyle \sup_{t\in\bbR_+}\sum_{i=0}^4\big\|\sup_{\theta\in\Theta}
\partial_\theta^i\lambda(t,\theta)\big\|_p<\infty$ for every $p>1$. 
\bd
\im[(ii)] $\displaystyle \sup_{t\in\bbR_+}\big\|\sup_{\theta\in\Theta}
|\lambda^\alpha(t,\theta)^{-1}1_{\{\lambda^\alpha(t,\theta)\not=0\}}|
\big\|_p<\infty$ for $p>1$ and $\alpha\in\I$. 
\im[(iii)] For any $\theta\in\Theta$ and $\alpha\in\I$, 
$\lambda^\alpha(t,\theta)=0$ if and only $\lambda^\alpha(t,\theta^*)=0$. 
\ed
\end{assumption}

\begin{assumption}\label{ass:b3}
For every $(\alpha,\theta)\in\I\times\Theta$, there exists a probability measure 
$\nu^\alpha(\cdot,\theta)$ on $\bbR_+\times\bbR_+\times\bbR^\sfp$ and $0<\delta<\frac{1}{2}$ such that 
\beas 
\sup_{\theta\in\Theta}T^{\delta}\left\|
\frac{1}{T}\int_0^T f\big(\lambda^\alpha(t,\theta^*),\lambda^\alpha(t,\theta),
\partial_\theta\lambda^\alpha(t,\theta)\big)dt
-\int f(x,y,z) \nu^\alpha(dx,dy,dz,\theta)\right\|_p
&\to&0
\eeas
as $T\to\infty$ 
for $p>1$ and $f\in C_B(\bbR_+\times\bbR_+\times\bbR^\sfp)$. 
\end{assumption}

Let $\nu^\alpha(dx,dy,\theta)=\int_{\bbR^\sfp}\nu^\alpha(dx,dy,dz,\theta)$. 
Define $\mathbb{Y}_T(\theta)$ by
\[
	\mathbb{Y}_T(\theta) = \frac{1}{T}(\ell_T(\theta) - \ell_T(\theta^*)),
\]
and $\mathbb{Y}(\theta)$ by the limit in probability of $\mathbb{Y}_T(\theta)$, 
where 
\beas 
\bbY(\theta) &=& 
\sum_{\alpha\in\I}
\int_{\bbR_+\times\bbR_+}
1_{\{x,y>0\}}\big\{x\log(y/x)-(y-x)\big\}\nu^\alpha(dx,dy,\theta).
\eeas

\begin{remark}
	From the above expression of $\mathbb{Y}(\theta)$, we easily obtain $\mathbb{Y}(\theta^*)=0$ and for all $\theta \in \Theta$, 
	\begin{align} \label{negativey}
		\mathbb{Y}(\theta) \le 0.
	\end{align}
\end{remark}
Then Lemma 3.10 of \cite{clinet2017statistical} gives 
\beas 
\sup_{\theta\in\Theta}\big|\bbY_T(\theta)-\bbY(\theta)\big|
&\to^p&
0
\eeas
as $T\to\infty$. 

The index $\chi_0$ is defined by 
\beas 
\chi_0 &=& \inf_{\theta\in\Theta\setminus\{\theta^*\}}
\frac{-\bbY(\theta)}{|\theta-\theta^*|^2}. 
\eeas
Then identifiability is ensured by the condition 
\begin{assumption}\label{ass:b4}
$\chi_0>0$. 
\end{assumption}

The Fisher information matrix is well defined by 
\beas 
\Gamma &=& \sum_{\alpha\in\I} \int_{\bbR_+\times\bbR_+\times\bbR^\sfp}
z^{\otimes2}x^{-1}1_{\{x>0\}}\nu^\alpha(dx,dy,dz,\theta^*).
\eeas
The matrix $\Gamma$ is non-degenerate by Assumption \ref{ass:b4}. 

By Theorem 3.14 of \cite{clinet2017statistical}, we have 
\begin{theorem}
Suppose that Assumptions \ref{ass:b1}-\ref{ass:b4} are satisfied. 
Then 
for $\ttheta=\ttheta^M$ and $\ttheta^B$, the convergence 
\beas 
\lim_{T\to\infty}E\big[{\sf f}\big(\sqrt{T}(\ttheta-\theta^*)\big)]
&=& 
E\big[{\sf f}\big(\Gamma^{-1/2}\zeta\big)\big]
\eeas
holds for all $f\in C(\bbR^\sfp)$ of polynomial growth, where 
$\zeta$
is a $\sfp$-dimensional standard normal random variable. 
\end{theorem}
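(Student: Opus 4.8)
The assertion is Theorem 3.14 of \cite{clinet2017statistical}, so the plan is to check that Assumptions \ref{ass:b1}--\ref{ass:b4} above are precisely (a specialization of) the hypotheses under which their quasi-likelihood analysis runs, and then to invoke it. First I would set up the quasi-likelihood random field $\mathbb{Z}_T(u)=\exp\big\{\ell_T(\theta^*+\rt u)-\ell_T(\theta^*)\big\}$ on $\mathbb{U}_T=\{u;\theta^*+\rt u\in\Theta\}$, with $\rt=T^{-1/2}$, and record the quadratic structure obtained by a fourth-order Taylor expansion of $\ell_T$ around $\theta^*$ (legitimate by the $C^4$ regularity in Assumption \ref{ass:b1}(ii)): the normalized score $\Delta_T=\rt\,\partial_\theta\ell_T(\theta^*)$ is a stochastic integral against the compensated measure $N-\int_0^\cdot\lambda(s,\theta^*)ds$, hence a local martingale, and the observed information is $-T^{-1}\partial_\theta^2\ell_T(\theta^*)$. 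Assumption \ref{ass:b3}, with the uniform-in-$\theta$ convergence rate $T^\delta$, feeds Lemma 3.10 of \cite{clinet2017statistical} and yields the uniform law of large numbers $\sup_{\theta\in\Theta}\big|\bbY_T(\theta)-\bbY(\theta)\big|\to^p0$ already quoted, and in particular $-T^{-1}\partial_\theta^2\ell_T(\theta^*)\to^p\Gamma$ with $\Gamma$ the Fisher information displayed above; Assumption \ref{ass:b4}, i.e. $\chi_0>0$, makes $\Gamma$ non-degenerate. A martingale central limit theorem, whose Lindeberg-type conditions are supplied by the predictability in Assumption \ref{ass:b1}(i) and the $L^p$ bounds of Assumption \ref{ass:b2}(i)--(ii), then gives $\Delta_T\to^{d}\Gamma^{1/2}\zeta$ stably.

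The substantive step is the polynomial type large deviation inequality (PLDI): for every $L>0$ there is a constant $C_L$ such that $P\big[\sup_{|u|\ge r}\mathbb{Z}_T(u)\ge e^{-r}\big]\le C_L r^{-L}$ for all $r>0$ and all $T$. I would obtain this from Theorem 1 of \cite{yoshida2011polynomial}, whose inputs are (a) moment bounds on the normalized score and on the increments of the log-likelihood-ratio field, produced by Assumption \ref{ass:b2}(i)--(ii) together with the Sobolev inequality on $\Theta$, which upgrades the pointwise $L^p$ bounds on $\partial_\theta^i\lambda$ to bounds on $\sup_{\theta\in\Theta}\partial_\theta^i\lambda$; and (b) a uniform non-degeneracy/identifiability estimate, namely that on the complement of a small ball around $\theta^*$ the key field $\bbY_T(\theta)$ stays below a negative level, which follows by combining $\chi_0>0$ with $\sup_{\theta}|\bbY_T(\theta)-\bbY(\theta)|\to^p0$ and the rate $\delta$ in Assumption \ref{ass:b3}. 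Assumption \ref{ass:b2}(ii)--(iii) is what makes $\log\lambda^\alpha(t,\theta)$ and its $\theta$-derivatives well defined and $L^p$-controlled along the sample path, so that $\mathbb{Z}_T$ and all the moments entering Yoshida's criterion are finite.

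Once the PLDI and the local quadratic approximation are in hand, the Ibragimov--Has'minskii--Kutoyants--Yoshida scheme yields convergence of moments for both estimators. For $\ttheta^M$ one localizes $\mathbb{Z}_T$; the PLDI shows that the normalized maximizer cannot escape a large ball $B(R)$ and provides $L^{\infty-}$-boundedness of $\rti(\ttheta^M-\theta^*)$, while the weak convergence $\mathbb{Z}_T(\cdot)\to^{d_s}\mathbb{Z}(\cdot)=\exp\big(u'\Gamma^{1/2}\zeta-\tfrac12u'\Gamma u\big)$ in $C(B(R))$ identifies the limit of the argmax as $\Gamma^{-1/2}\zeta$. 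For $\ttheta^B$ the same argument applies to the posterior, the bounds $0<\inf_\theta\pi(\theta)\le\sup_\theta\pi(\theta)<\infty$ ensuring the localized posterior has the identical limit. Weak convergence plus the $L^{\infty-}$ bound then give $E\big[{\sf f}(\rti(\ttheta-\theta^*))\big]\to E\big[{\sf f}(\Gamma^{-1/2}\zeta)\big]$ for every ${\sf f}$ of polynomial growth, which is the claim since $\rti=\sqrt{T}$.

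I expect the PLDI to be the main obstacle: the delicate points are converting the integrated ergodic bounds of Assumption \ref{ass:b3} into a uniform-in-$\theta$ deviation estimate at a polynomial rate sufficient for every $L$, and controlling the logarithm in $\ell_T$ near the set where some $\lambda^\alpha$ vanishes, for which Assumption \ref{ass:b2}(ii)--(iii) is essential. All of this is carried out in \cite{clinet2017statistical}, so in the paper the proof reduces to the observation that Assumptions \ref{ass:b1}--\ref{ass:b4} match the hypotheses of their Theorem 3.14.
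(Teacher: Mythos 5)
Your proposal is correct and matches the paper exactly: the paper offers no argument for this theorem beyond invoking Theorem 3.14 of \cite{clinet2017statistical}, and your reduction of Assumptions \ref{ass:b1}--\ref{ass:b4} to the hypotheses of that theorem is precisely what is intended. The additional sketch of the quasi-likelihood analysis (polynomial type large deviation inequality, local asymptotic quadraticity, and the Ibragimov--Has'minskii scheme for convergence of moments) is a faithful account of how the cited result is established, but it is not reproduced in the paper itself.
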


Now we are on the point of applying it to the penalized methods. 
Take $\ttheta=\ttheta^M$ or $\ttheta^B$. 
The penarized estimator will be denoted by $\hat{\theta}$. 
Let $\rt = T^{-\frac12}$ and let 
\beas 
\hG &=& 
-T^{-1}\partial_\theta^2\ell_T(\ttheta) 1_{\{-\partial_\theta^2\ell_T(\ttheta)\in\cals_+\}}
+T^{-1}I_{\sfp}
\eeas
where 
$\cals_+$ is the set of $\sfp\times\sfp$ positive definite symmetric matrices.
We embed the parametric model $\Theta$ into $\bbR^\sfp$ 
in use of the penalized method. 
It causes any problem asymptotically. 
If the reader prefers $\Theta$-valued estimators, he/she can use 
$\hat{\theta}1_{\{\hat{\theta}\in\Theta\}}+\theta_11_{\{\hat{\theta}\not\in\Theta\}}$ for $\hat{\theta}$ 
with a any given value $\theta_1\in\Theta$. 

It is easy to show 
\beas
\lim_{T\to\infty}\big\|T^{\delta} \big(\hG-\Gamma\big)\big\|_p
&=& 0
\eeas
for every $p>1$ and $0 < \delta < \frac12$. 
Therefore Assumptions \ref{ass:rootn}-\ref{ass:Lpbdd} are fulfilled and 
Theorems \ref{thm:rootn}-4 hold in this situation. 

\subsection{Cox type of process with ergodic covariates}\label{cox-ergodic}

We consider the multivariate point process $N$ in Section \ref{290120-1} 
with intensity processes
\begin{align}\label{290120-2}
	\lambda^\alpha (t,\theta) = \exp{\Bigl( \sum_{j \in {\bf J}} \theta_j^\alpha X_t^j \Bigr)},
	\text{\quad($\alpha \in {\bf I}$)}
\end{align}
where ${\bf J} = \{ 1,...,J \}$ is an index set and $X^j=(X_t^j)_{t\in\bbR_+}$ ($j \in {\bf J}$) are left-continuous adapted stochastic covariate processes satisfying the following conditions. 

\begin{assumption}\label{ass:c1}
The $J$-dimensional process $(X^j)_{j\in\J}$ is stationary and 
$E[\exp(uX^j_0)] <\infty$ for all $u \in \mathbb{R}$ and $j\in\J$. 
\end{assumption}

Denote by $\calb_I$ the $\sigma$-field generated by 
$\{X^j_t;\>t\in I,\>j\in\J\}$ for $I\subset\bbR_+$. 
Let 
\beas 
\alpha(h) &=& \sup_{A\in\calb_{[0,t]},B\in\calb_{[t+h,\infty)}}
\big|P[A\cap B]-P[A]P[B]\big|
\eeas
for $h>0$. 

\begin{assumption}\label{ass:c2}
There exists $a>0$ such that 
$\alpha(h)\leq a^{-1}e^{-ah}$ for $h>0$. 
\end{assumption}

Let $X_t=(X^j_t)_{j\in\J}$. 
For the model (\ref{290120-2}), $\theta=(\theta^\alpha_j)_{\alpha\in\I,j\in\J}$, $\sfp=\sfd J$ and 
\begin{align*}
	\hG
	&= \text{diag }[\hG_1,...,\hG_\sfd]
\end{align*}
where 
\begin{align}\label{290120-4}
	\hG_\alpha 
	&= \frac{1}{T}  \int_0^T X_t^{\otimes2} \exp \Bigl(\sum_{j \in {\bf J}} \ttheta_j^\alpha X^j_t \Bigr) dt+ \frac{1}{T}I_J.
\end{align}
It should be remarked that the first term on the right hand side of (\ref{290120-4}) 
may degenerate in general. 
Under Assumptions \ref{ass:c1} and \ref{ass:c2}, we obtain $\hG \to^p \Gamma$ for 
$\Gamma=\text{diag }[\Gamma_1(\theta^*),...,\Gamma_\sfd(\theta^*)]$, where 
\begin{align*}
	\Gamma_\alpha(\theta) &= 
	E\bigg[ X_0^{\otimes2} \exp \Bigl( \sum_{j \in {\bf J}} 
	\theta_j^\alpha X^j_0 \Bigr) \bigg]. 
\end{align*}
Write $\Gamma(\theta)=\text{diag }[\Gamma_1(\theta),...,\Gamma_\sfd(\theta)]$. 

\begin{assumption}\label{ass:c3}
$\inf_{\theta\in\bar{\Theta}}\det \Gamma(\theta)>0$. 
\end{assumption}

We assume that $\Theta$ is an open bounded convex subset in $\mathbb{R}^\sfp$ that admits 
the Sobolev inequality in Section \ref{290120-1}.

\begin{lemma}\label{lem:checkb3}
	Assumption \ref{ass:b3} holds under Assumptions \ref{ass:c1} and \ref{ass:c2}.
\end{lemma}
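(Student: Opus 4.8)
The plan is to recognize that, for the Cox intensity (\ref{290120-2}), the integrand in Assumption~\ref{ass:b3} is a fixed bounded continuous $f$ composed with a smooth, $\theta$-dependent transform of the stationary covariate vector $X_t=(X^j_t)_{j\in\J}$, and then to prove a law of large numbers with a polynomial rate for time averages of $(X_t)$, uniformly in $\theta$, using the exponential mixing of Assumption~\ref{ass:c2} and the exponential moments of Assumption~\ref{ass:c1}. Write $\Psi^\alpha(\theta,x)=\bigl(\lambda^\alpha(x,\theta^*),\lambda^\alpha(x,\theta),\partial_\theta\lambda^\alpha(x,\theta)\bigr)\in\bbR_+\times\bbR_+\times\bbR^\sfp$, where $\lambda^\alpha(x,\theta)=\exp\bigl(\sum_{j\in\J}\theta^\alpha_j x^j\bigr)$ is now viewed as a function of $x\in\bbR^J$; then the integrand equals $\bigl(f\circ\Psi^\alpha(\theta,\cdot)\bigr)(X_t)$, and $\Psi^\alpha(\theta,\cdot)$ is $C^\infty$ in $x$ and $\theta$, its $\theta$-derivatives being polynomials in $x$ times $\lambda^\alpha(x,\theta)$. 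I would define $\nu^\alpha(\cdot,\theta)$ to be the image of the law of $X_0$ under $x\mapsto\Psi^\alpha(\theta,x)$; since $f$ is bounded, $\int f\,\nu^\alpha(\cdot,\theta)=E\bigl[(f\circ\Psi^\alpha(\theta,\cdot))(X_0)\bigr]$ is well defined and the quantity to control is the centred average $M_T(\theta)=\frac1T\int_0^T g_\theta(X_t)\,dt-E[g_\theta(X_0)]$ with $g_\theta:=f\circ\Psi^\alpha(\theta,\cdot)$.

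The engine is a moment bound for time averages of geometrically strongly mixing stationary processes: under Assumptions~\ref{ass:c1} and~\ref{ass:c2}, for every $p>1$ there are $C_p$ and $p'>p$ with $\bigl\|\frac1T\int_0^T\psi(X_t)\,dt-E[\psi(X_0)]\bigr\|_p\leq C_p\,\|\psi(X_0)-E[\psi(X_0)]\|_{p'}\,T^{-1/2}$ for every measurable $\psi$ with $\|\psi(X_0)\|_{p'}<\infty$; this is standard for strongly mixing sequences with summable (here exponentially decaying) mixing coefficients, and the exponential moment bound in Assumption~\ref{ass:c1} is precisely what makes it applicable to the unbounded functionals below, since it forces $\lambda^\alpha(t,\theta)$, $\partial_\theta\lambda^\alpha(t,\theta)$ and $\partial_\theta^2\lambda^\alpha(t,\theta)$ to have finite moments of every order, uniformly in $t$ and in $\theta\in\bar{\Theta}$. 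Applied to $\psi=g_\theta$, which is bounded with $\|g_\theta\|_\infty\leq\|f\|_\infty$, this already gives $\|M_T(\theta)\|_p\leq C_p\|f\|_\infty T^{-1/2}$ for each $\theta$, with constant independent of $\theta$.

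The remaining, and principal, point is to upgrade this to $\sup_{\theta\in\Theta}T^\delta\|M_T(\theta)\|_p\to0$ for some $\delta\in(0,1/2)$, which is delicate because $f$ is only continuous, so $\theta\mapsto M_T(\theta)$ need not be differentiable and Sobolev's inequality on $\Theta$ cannot be used directly. I would introduce a mollification $f_\varepsilon$ of $f$ at scale $\varepsilon$, of class $C^1$ with $\|f_\varepsilon\|_\infty\leq\|f\|_\infty$ and $\|\nabla f_\varepsilon\|_\infty$ of order $\varepsilon^{-1}$: then $\theta\mapsto g_{\varepsilon,\theta}:=f_\varepsilon\circ\Psi^\alpha(\theta,\cdot)$ is $C^1$, its $\theta$-gradient is bounded by $\varepsilon^{-1}$ times a polynomial-times-exponential function of $|X_t|$, hence has $L^{p''}$ norm of order $\varepsilon^{-1}$ uniformly in $t$ and $\theta$, and the moment bound of the previous paragraph, applied to $g_{\varepsilon,\theta}$ and to its $\theta$-gradient, gives $\|M^{(\varepsilon)}_T(\theta)\|_p$ of order $T^{-1/2}$ and $\|\partial_\theta M^{(\varepsilon)}_T(\theta)\|_p$ of order $\varepsilon^{-1}T^{-1/2}$, uniformly in $\theta$, where $M^{(\varepsilon)}_T$ is the centred average built from $g_{\varepsilon,\theta}$. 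The Sobolev inequality on $\Theta$ with exponent $r>\sfp$, after taking $L^p(\Omega)$-norms through it in the usual way, promotes this to $\|\sup_{\theta\in\Theta}|M^{(\varepsilon)}_T(\theta)|\|_p$ of order $\varepsilon^{-1}T^{-1/2}$. For the mollification error I would split according to $\{|X_t|\leq R\}$ and $\{|X_t|>R\}$: on the first event $\Psi^\alpha(\theta,x)$ stays in a compact set on which $f$ is uniformly continuous, so $|g_\theta-g_{\varepsilon,\theta}|\leq\omega_{f,R}(\varepsilon)$, the modulus of continuity of $f$ on that compact; on the second event $|g_\theta-g_{\varepsilon,\theta}|\leq2\|f\|_\infty$, while $\frac1T\int_0^T1_{\{|X_t|>R\}}\,dt\leq e^{-R}\frac1T\int_0^Te^{|X_t|}\,dt$ has $L^p$ norm of order $e^{-R}$ uniformly in $T$ by Assumption~\ref{ass:c1} (the same bound holding for $E[1_{\{|X_0|>R\}}]$), and this part does not involve $\theta$, so the supremum in $\theta$ is free. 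Combining, $\|\sup_{\theta\in\Theta}|M_T(\theta)|\|_p$ is bounded by a constant times $\varepsilon^{-1}T^{-1/2}+\omega_{f,R}(\varepsilon)+e^{-R}$ for all $R\geq1$, $\varepsilon\in(0,1)$, $p>1$; choosing $R=R_T\to\infty$ and $\varepsilon=\varepsilon_T\to0$ so that $e^{-R_T}$ and $\varepsilon_T^{-1}T^{-1/2}$ decay faster than the prescribed power $T^{-\delta}$ while $\omega_{f,R_T}(\varepsilon_T)\to0$ still holds yields the claim.

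The step I expect to be the main obstacle is exactly this last balancing: obtaining a genuinely uniform-in-$\theta$ rate for an \emph{arbitrary} bounded continuous $f$, simultaneously with the unbounded integrand $\partial_\theta\lambda^\alpha$ and with the possible degeneracy of $\lambda^\alpha$, which forces one to work on $\{\lambda^\alpha\neq0\}$ as in Assumption~\ref{ass:b2}(ii),(iii). The pushforward definition of $\nu^\alpha$ and the fixed-$\theta$ law of large numbers with rate are routine; the care lies in pinning down the precise moment inequality for continuous-time time averages of unbounded functionals of a geometrically mixing process, in choosing the mollification scale $\varepsilon_T$ and truncation level $R_T$ so that all three error terms survive multiplication by $T^\delta$, and in checking that Sobolev's inequality on the convex bounded set $\Theta$ applies to the smoothed random field $\theta\mapsto M^{(\varepsilon)}_T(\theta)$ with constants independent of $T$. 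Note that the degeneracy remarked after (\ref{290120-4}) is harmless here, since only convergence of the average is claimed, not of an inverse.
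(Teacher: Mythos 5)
Your first two paragraphs already constitute the proof, and they coincide with the paper's argument: define $\nu^\alpha(\cdot,\theta)$ as the pushforward of the law of $X_0$ under $x\mapsto\bigl(\lambda^\alpha(x,\theta^*),\lambda^\alpha(x,\theta),\partial_\theta\lambda^\alpha(x,\theta)\bigr)$, use stationarity to identify $\int f\,d\nu^\alpha(\cdot,\theta)$ with $E[\tilde f_\theta(X_0)]$, and apply an $L^p$ moment inequality for time averages of the geometrically $\alpha$-mixing covariate (the paper invokes Lemma 4 of \cite{yoshida2011polynomial}, after checking its hypothesis via the exponential moment bound of Assumption \ref{ass:c1}) to get $\bigl\|T^{-1}\int_0^T\{\tilde f_\theta(X_t)-E[\tilde f_\theta(X_0)]\}\,dt\bigr\|_p=O(T^{-1/2})$ with a constant uniform in $\theta$. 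That finishes the lemma, because Assumption \ref{ass:b3} asks for $\sup_{\theta}T^{\delta}\|\cdot\|_p\to0$, i.e.\ the supremum over $\theta$ of the $L^p(\Omega)$ norms, not the $L^p$ norm of $\sup_\theta|\cdot|$; a pointwise-in-$\theta$ bound with $\theta$-independent constants is all that is required, and $\delta<1/2$ gives room to spare. The entire third paragraph — mollification of $f$, Sobolev embedding in $\theta$, truncation of $|X_t|$ — is therefore aimed at a stronger statement that the assumption does not demand, and, as a bonus remark, that stronger argument would not actually close as written: for an arbitrary bounded continuous $f$ the modulus of continuity $\omega_{f,R_T}(\varepsilon_T)$ carries no rate, so $T^\delta\omega_{f,R_T}(\varepsilon_T)$ cannot be forced to zero for a fixed $\delta>0$. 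The only other (minor) divergence from the paper is that its proof treats $f$ of polynomial growth (the class $C_{\up}$), which is why it needs the moment estimate (\ref{lpofx}); for the literal $f\in C_B$ of the assumption your bounded-$f$ shortcut suffices.
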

\begin{proof}[Proof] We remark that $\exp(|x|) < \exp(x) + \exp(-x)$ for all $x \in \mathbb{R}$. Thus, for all $(\theta, \alpha, j) \in \Theta\times{\bf I}\times{\bf J}$ and $p,q > 1$ and $t>0$,
\begin{align}
	E\Biggl[ |X_t^j|^p \Bigl\{ \exp( \theta_j^{\alpha} X_t^j ) \Bigr\}^q \Biggl]
	&\le E\Biggl[ \exp(p|X_t^j|) \exp( q|\theta_j^{\alpha}||X_t^j|) \Biggr] \notag \\
	&= E\Biggl[ \exp\Bigl\{ \big(p + q|\theta_j^{\alpha}|\big)|X_0^j|\Bigr\} \Biggr] < C_{p,q}, \label{lpofx}
\end{align}
where $C_{p,q}$ is a constant depend on $p, q$ but not depending of $\theta, i, j$. By the definition of $\lambda^{\alpha} (t,\theta)$, for all $\alpha \in {\bf I}$,
\begin{align*}
	\lambda^{\alpha}(t,\theta) &= \exp \Bigl(\sum_{j \in {\bf J}} \theta_j^\alpha X^j_t \Bigr), \\
	\partial_{\theta^{\alpha'}} \lambda^{\alpha} (t,\theta) &=
	\begin{cases}
		X_t \exp \Bigl(\sum_{j \in {\bf J}} \theta_j^\alpha X^j_t \Bigr) & \text{if } \alpha' = \alpha \\
		0 & \text{if } \alpha' \ne \alpha
	\end{cases},
\end{align*}
where $\theta^{\alpha} = [\theta_j^{\alpha}]_j$. 
For $f \in C_{\up}(\bbR_+\times\bbR_+\times\bbR^\sfp)$, $\alpha \in {\bf I}$ and $\theta \in \Theta$, define $\tilde{f}^{\alpha}_{\theta} \in D_{\up}(\mathbb{R}^J)$ by
\begin{align*}
	\tilde{f}^{\alpha}_{\theta}(x) = f({\rm e}^{\sum_j \theta_j^{*\alpha}x_j}, {\rm e}^{\sum_j \theta_j^{\alpha}x_j}, x{\rm e}^{\sum_j \theta_j^{\alpha}x_j}),
\end{align*}
where $D_{\up}(\mathbb{R}^J)$ is the set of continuous functions $\tilde{f}:x \rightarrow \tilde{f}(x)$ from $\mathbb{R}^J$ to $\mathbb{R}$ which are of polynomial growth in $(x,{\rm e}^{|x|})$.
Then we can write for all $\alpha \in {\bf I}$, 
\begin{align*}
	\tilde{f}_{\theta}^{\alpha}(X_t) = f\big(\lambda^\alpha(t,\theta^*),\lambda^\alpha(t,\theta), \partial_\theta\lambda^\alpha(t,\theta)\big).
\end{align*}
By (\ref{lpofx}), we obtain for all $\alpha \in {\bf I}$ and $p>1$, 
\begin{align*}
	\sup_{\theta \in \Theta} E[ |\tilde{f}_{\theta}^{\alpha}(X_t)|^p ] = \sup_{\theta \in \Theta} E[ |\tilde{f}_{\theta}^{\alpha}(X_0)|^p ] < \infty.
\end{align*}
Since
\begin{align*}
	\int f(x,y,z) \nu^\alpha(dx,dy,dz,\theta) = E[\tilde{f}_{\theta}^{\alpha}(X_0)]
\end{align*}
for all $\alpha \in {\bf I}$, we may show that for all $p>1$, $\alpha \in {\bf I}$ and $\tilde{f}_{\theta}^{\alpha} \in D_{\up}(\mathbb{R}^J)$,
\begin{align*}
	\sup_{\theta \in \Theta} T^{\frac12}\Biggl\| \frac{1}{T} \int_0^T \Bigl\{ \tilde{f}_{\theta}^{\alpha}(X_t) - E[\tilde{f}_{\theta}^{\alpha}(X_0)] \Bigr\}dt \Biggr\|_p = O(1).
\end{align*}
By Assumption \ref{ass:c1}, there exists a constant $C_0$ such that
\begin{align*}
	\Biggl\| \int_{s_1}^{s_2} \Bigl( \tilde{f}_{\theta}^{\alpha}(X_t) - E[\tilde{f}_{\theta}^{\alpha}(X_0)] \Bigr) dt \Biggr\|_p
		&\leq C_0(s_2-s_1)^p
\end{align*}
for $s_1 < s_2$. Then, Lemma 4 in \cite{yoshida2011polynomial} implies under Assumption \ref{ass:c2} that
\begin{align*}
	E\left[ \Biggl| \int_0^T \Bigl( \tilde{f}_{\theta}^{\alpha}(X_t) - E[\tilde{f}_{\theta}^{\alpha}(X_0)] \Bigr) dt \Biggr|^p \right]
	&= E\left[ \Biggl| \sum_{l=1}^{\lfloor T \rfloor} \int_{\frac{(l-1)T}{\lfloor T \rfloor}}^{\frac{lT}{\lfloor T \rfloor}} \Bigl( \tilde{f}_{\theta}^{\alpha}(X_t) - E[\tilde{f}_{\theta}^{\alpha}(X_0)] \Bigr) dt \Biggr|^p \right] \\
	&\le C_1\lfloor T \rfloor^{\frac{p}{2}} + C_2\lfloor T \rfloor \\
	&= O(T^{\frac{p}{2}}),
\end{align*}
for $T\ge1$ and $p\ge2$,
where $C_1$ and $C_2$ are constants depending on $a$ and $p$. Therefore, we have
\begin{align*}
	\sup_{\theta \in \Theta} T^{\frac12} \Biggl\| \frac{1}{T} \int_0^T \Bigl( \tilde{f}_{\theta}^{\alpha}(X_t) - E[\tilde{f}_{\theta}^{\alpha}(X_0)] \Bigr) dt \Biggr\|_p = O(1).
\end{align*}
\end{proof}

Next, we will give a sufficient condition for Assumption \ref{ass:b4}.
\begin{lemma}\label{lem:checkb4}
	We assume that $\Theta$ is convex.
	Then Assumption \ref{ass:b4} follows from Assumptions \ref{ass:c1} and \ref{ass:c3}.
\end{lemma}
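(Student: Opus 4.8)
The plan is to give $\mathbb{Y}$ an explicit closed form for the model (\ref{290120-2}), identify $-\partial_\theta^2\mathbb{Y}(\theta)$ with the matrix $\Gamma(\theta)$ introduced just above, and then read off the quadratic lower bound defining $\chi_0$ from a second order Taylor expansion of $\mathbb{Y}$ at $\theta^*$. Convexity of $\Theta$ will enter only to keep the expansion segment inside $\bar{\Theta}$, where Assumption \ref{ass:c3} is available.

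First I would rewrite $\mathbb{Y}$. For (\ref{290120-2}) one has $\lambda^\alpha(t,\theta)>0$ for every $\theta$, so the indicator $1_{\{x,y>0\}}$ in the definition of $\mathbb{Y}$ is identically $1$, and the identity $\int f(x,y,z)\,\nu^\alpha(dx,dy,dz,\theta)=E[\tilde{f}^{\alpha}_{\theta}(X_0)]$ established in the proof of Lemma \ref{lem:checkb3} gives
\[
\mathbb{Y}(\theta)=\sum_{\alpha\in\I}E\Big[\lambda^\alpha(0,\theta^*)\big\{(\theta^\alpha-\theta^{*\alpha})'X_0-e^{(\theta^\alpha-\theta^{*\alpha})'X_0}+1\big\}\Big],
\]
with $\theta^\alpha=[\theta^\alpha_j]_{j\in\J}$. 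The Hölder/exponential-moment estimate behind (\ref{lpofx}), combined with the boundedness of $\Theta$ and Assumption \ref{ass:c1}, supplies an integrable dominating function for this integrand and for its first two $\theta$-derivatives, uniformly for $\theta$ near any point of $\bar{\Theta}$; hence $\mathbb{Y}\in C^2(\bar{\Theta})$ and one may differentiate under the expectation. Since the $\alpha$-summand depends only on the block $\theta^\alpha$, this yields
\[
-\partial_\theta^2\mathbb{Y}(\theta)=\text{diag}\big[E[X_0^{\otimes2}e^{(\theta^1)'X_0}],\dots,E[X_0^{\otimes2}e^{(\theta^\sfd)'X_0}]\big]=\Gamma(\theta).
\]
Moreover $\mathbb{Y}(\theta^*)=0$, and since $\mathbb{Y}\le0$ on $\Theta$ by (\ref{negativey}) while $\theta^*$ is an interior point, $\partial_\theta\mathbb{Y}(\theta^*)=0$.

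Next I would show that $c_0:=\tfrac12\inf_{\theta\in\bar{\Theta}}\lambda_{\min}(\Gamma(\theta))>0$. Each block $\Gamma_\alpha(\theta)=E[X_0^{\otimes2}\exp((\theta^\alpha)'X_0)]$ is positive semidefinite and, by dominated convergence (again via Assumption \ref{ass:c1}), continuous on the compact set $\bar{\Theta}$; Assumption \ref{ass:c3} forces $\det\Gamma(\theta)>0$ there, so each $\Gamma(\theta)$ is actually positive definite, hence $\theta\mapsto\lambda_{\min}(\Gamma(\theta))$ is a strictly positive continuous function on a compact set and is bounded below by a positive constant. Finally, for $\theta\in\Theta\setminus\{\theta^*\}$ the segment $\theta^*+s(\theta-\theta^*)$, $s\in[0,1]$, lies in $\bar{\Theta}$ by convexity, so Taylor's formula with integral remainder, together with $\mathbb{Y}(\theta^*)=0$ and $\partial_\theta\mathbb{Y}(\theta^*)=0$, gives
\[
-\mathbb{Y}(\theta)=\int_0^1(1-s)\,(\theta-\theta^*)'\,\Gamma\big(\theta^*+s(\theta-\theta^*)\big)\,(\theta-\theta^*)\,ds\ \ge\ c_0\,|\theta-\theta^*|^2,
\]
whence $\chi_0\ge c_0>0$, i.e. Assumption \ref{ass:b4}. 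The step demanding the most care is the uniform domination needed both to differentiate twice under the expectation and to secure continuity of $\Gamma(\cdot)$ on $\bar{\Theta}$; but this is essentially the computation already carried out around (\ref{lpofx}), so I do not anticipate a genuine obstacle beyond that bookkeeping.
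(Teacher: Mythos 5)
Your proof is correct, and its core is the same as the paper's: write $\mathbb{Y}$ explicitly for the Cox model, check $\mathbb{Y}(\theta^*)=0$ and $\partial_\theta\mathbb{Y}(\theta^*)=0$, identify $-\partial_\theta^2\mathbb{Y}(\theta)=\Gamma(\theta)$ blockwise, and invoke Assumption \ref{ass:c3} for nondegeneracy. Where you diverge is the endgame. The paper splits $\bar\Theta$ into a region away from $\theta^*$ and a neighborhood of $\theta^*$: on the far region it argues by contradiction that $-\mathbb{Y}>0$ (strict convexity along the segment joining $\theta^*$ to a hypothetical second zero would force $-\mathbb{Y}<0$ somewhere, contradicting (\ref{negativey})), and then uses compactness and continuity to get a positive infimum of $-\mathbb{Y}(\theta)/|\theta-\theta^*|^2$ there; near $\theta^*$ it uses a mean-value second-order expansion. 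You instead use a single Taylor expansion with integral remainder along the segment (which convexity keeps inside $\bar\Theta$), bounded below by $c_0=\tfrac12\inf_{\bar\Theta}\lambda_{\min}(\Gamma(\theta))>0$, where positivity of $c_0$ follows from positive semidefiniteness of each block, $\det\Gamma>0$ via Assumption \ref{ass:c3}, and continuity on the compact $\bar\Theta$. Your version is more streamlined and gives the quadratic lower bound uniformly in one step, at the cost of having to justify that $\mathbb{Y}\in C^2(\bar\Theta)$ with differentiation under the expectation — which you correctly reduce to the exponential-moment bookkeeping already done around (\ref{lpofx}); the paper's version only needs the Hessian pointwise plus the sign information (\ref{negativey}) it has already recorded. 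Both uses of convexity of $\Theta$ are the same: keeping the connecting segment inside $\bar\Theta$.
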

\begin{proof}[Proof] By the definition of $\mathbb{Y}(\theta)$,
\begin{align*}
	\mathbb{Y}(\theta) 
	&= \sum_{\alpha \in {\bf I}} \bbY^{\alpha}(\theta),
\end{align*}
where $\mathbb{Y}^{\alpha}(\theta)$ is given by
\begin{align*}
	\mathbb{Y}^{\alpha}(\theta) = E\left[ \exp\Bigl(\sum_{j \in {\bf J}}\theta_j^{*\alpha} X_0^j\Bigr)\Bigl(\sum_{j \in {\bf J}}(\theta_j^{\alpha} - \theta_j^{*\alpha})X_0^j \Bigr) - \biggr\{ \exp\Bigl(\sum_{j \in {\bf J}} \theta_j^{\alpha}X_0^j \Bigr) - \exp\Bigl( \sum_{j \in {\bf J}} \theta_j^{*\alpha}X_0^j \Bigr) \Biggl\} \right]
\end{align*}
for $\alpha \in {\bf I}$. Thus we have
\begin{align*}
	\partial_{\theta}\mathbb{Y}(\theta) =
	\begin{bmatrix}
		\partial_{\theta^1}\mathbb{Y}^1(\theta) \\
		\partial_{\theta^2}\mathbb{Y}^2(\theta) \\
		\vdots \\
		\partial_{\theta^{\sfd}}\mathbb{Y}^{\sfd}(\theta) \\
	\end{bmatrix},
\end{align*}
where 
\begin{align*}
	\partial_{\theta^{\alpha}} \mathbb{Y}^{\alpha}(\theta)
	= E\left[ \Biggl\{ \exp\Bigl(\sum_{j \in {\bf J}}\theta_j^{*\alpha} X_0^j\Bigr) - \exp\Bigl(\sum_{j \in {\bf J}}\theta_j^{\alpha} X_0^j\Bigr) \Biggr\} X_0 \right].
\end{align*}
Similarly, 
\begin{align*}
	\partial_{\theta}^2 \mathbb{Y}(\theta) = {\rm diag}\Bigl[ \partial_{\theta^1}^2 \mathbb{Y}^1(\theta), \partial_{\theta^2}^2 \mathbb{Y}^2(\theta), ..., \partial_{\theta^{\sfd}}^2 \mathbb{Y}^{\sfd}(\theta) \Bigr],
\end{align*}
where 
\begin{align*}
	\partial_{\theta^{\alpha}}^2 \mathbb{Y}^{\alpha} (\theta) = - E\Bigl[ X_0^{\otimes2}\exp\Bigl(\sum_{j \in {\bf J}}\theta_j^{\alpha} X_0^j\Bigr) \Bigr] = - \Gamma_{\alpha}(\theta).
\end{align*}
Therefore, we have
\begin{align*}
	\partial_{\theta}^2 \mathbb{Y}(\theta) = -\Gamma(\theta).
\end{align*}
By Assumption \ref{ass:c3}, for all $\theta \in \bar{\Theta}$, $-\partial_{\theta}^2 \mathbb{Y}(\theta)$ is positive definite. Therefore, $-\mathbb{Y}(\theta)$ is a strictly convex function. We assume that there exists $\theta_1 \in \bar{\Theta}\setminus\{\theta^*\}$ such that $-\mathbb{Y}(\theta_1)=0$. Let $\phi(s)=(1-s)\theta^*+s\theta_1$ for $s\in[0,1]$. Then by convexity of $\bar{\Theta}$, $\phi(s)\in\bar{\Theta}$ for all $s\in[0,1]$, and by strict convexity of $-\mathbb{Y}(\theta)$, $-\mathbb{Y}(\phi(s)) < (1-s)(-\mathbb{Y}(\theta^*)) + s(-\mathbb{Y}(\theta_1)) = 0$, however since, from (\ref{negativey}), $-\mathbb{Y}(\cdot)$ is nonnegative, this is contradiction. Therefore, for all open neighborhoods $U(\theta^*)$ of $\theta^*$, 
\begin{align*}
	\inf_{\theta \in \bar{\Theta}\setminus U(\theta^*)} \frac{-\mathbb{Y}(\theta)}{|\theta-\theta^*|^2} > 0.
\end{align*}

Next, take a neighborhood $V(\theta^*)$ of $\theta^*$ such that for all $\theta \in V(\theta^*)$, there exists a vector $\dtheta$ such that 
\begin{align}
	-\mathbb{Y}(\theta) = -\frac12\partial_{\theta}^2\mathbb{Y} (\dtheta) [(\theta-\theta^*)^{\otimes 2}]. \label{out}
\end{align}
By positive definiteness of $-\partial_{\theta}^2\mathbb{Y}(\theta)$ and Assumption \ref{ass:c1},
\begin{align} 
	\inf_{\theta \in V(\theta^*)\setminus\{\theta^*\}} \frac{-\mathbb{Y}(\theta)}{|\theta-\theta^*|^2} 
	&\ge \inf_{\theta \in V(\theta^*)\setminus\{\theta^*\}} \frac{-\partial_{\theta}^2\mathbb{Y}(\dtheta)[(\theta-\theta^*)^{\otimes 2}]}{2|\theta-\theta^*|^2} \notag \\
	&\ge \inf_{\theta \in V(\theta^*)} \frac12 \tau_{\min}(\Gamma(\theta)) > 0. \label{in}
\end{align}
Therefore, by (\ref{out}) and (\ref{in}), we obtain $\chi_0 > 0$.
\end{proof}

Assumption \ref{ass:b1} follows from (\ref{290120-2}), and Assumption \ref{ass:b2} follows from Assumption \ref{ass:c1} and Sobolev's inequality. Thus, Theorems \ref{thm:rootn}-\ref{thm:rate} for the model (\ref{290120-2}) hold under Assumptions \ref{ass:b4}-\ref{ass:c2}.

\section{Diffusion type processes}
\subsection{Ergodic case}
Given a stochastic basis $(\Omega,\calf,{\bf F},P)$, $\mathbb{F}=(\calf_t)_{t\in\mathbb{R}_+}$, we consider a $d$-dimensional process $X = (X_t)_{t \in \mathbb{R}_+}$ 
adapted to the filtration ${\bf F} = (\mathcal{F}_t)_{t \in \mathbb{R}_+}$ and satisfying the following stochastic integral equation
\[
	X_t = X_0 + \int_0^t a(X_s, \theta_2) ds + \int_0^t b(X_s, \theta_1) dW_s, \quad t \in \mathbb{R}_+
\]
where $W$ is an $\sf r$-dimensional standard ${\bf F}$-Wiener process, $\theta = (\theta_1,\theta_2) \in \Theta_1 \times \Theta_2 = \Theta$ with $\Theta_1$ and $\Theta_2$ being bounded domains of $\mathbb{R}^{\sfp_1}$ and $\mathbb{R}^{\sfp_2}$, respectively, moreover $b : \mathbb{R}^d \times \Theta_1 \rightarrow \mathbb{R}^d \otimes \mathbb{R}^{\sf r}$ and $a : \mathbb{R}^d \times \Theta_2 \rightarrow \mathbb{R}^d$. We define the function $B$ by $B(x,\theta_1) = b(x,\theta_1)b(x,\theta_1)'$ and assume that $B(x,\theta_1)$ is invertible. We denote the true value of $\theta = (\theta_1,\theta_2)$ by $\theta^* = (\theta^*_1,\theta^*_2)$ and the number of active parameters of $\theta^*_k$ by $\sfp_k^0$ for $k=1,2$. We assume that each parameter space have a locally Lipschitz boundary. \par 
In this subsection, we assume that the process $X$ is ergodic. That is, there exists a unique invariant probability measure $\mu = \mu_{\theta^*}$ such that for any bounded measurable function $g : \mathbb{R}^d \rightarrow \mathbb{R}$, the convergence
\begin{align*}
	\frac{1}{T} \int_0^T g(X_t)dt \rightarrow^p \int_{\mathbb{R}^d} g(x) \mu(dx)
\end{align*}
holds. \par
We suppose that $0 \in \mathbb{R}^{\sfp_1 + \sfp_2}$ is in $\Theta$. Here we have the discrete-time observations $\mathbf{x}_n = (X_{t_i})_{i=0}^n$ and $\mathbf{y}_n = (Y_{t_i})_{i=0}^n$ where $t_i = ih$ with $h=h_n$ depending on $n$. We will consider the situation when $h_n \rightarrow 0$ and $nh_n^p \rightarrow 0$ as $n \rightarrow \infty$, and there exists $\epsilon_0 \in (0, \frac{p-1}{p})$ such that $n^{\epsilon_0} \le nh_n$ for large $n$. \par
Here, we assume the following properties of an initial estimator $\ttheta = (\ttheta_{1,n}, \ttheta_{2,n})$ :
\[
	(\sqrt{n}(\ttheta_{1,n} - \theta^*_1), \sqrt{nh}(\ttheta_{2,n} - \theta^*_2)) \rightarrow^d (\Gamma_1^{-\frac12}\zeta_1, \Gamma_2^{-\frac12}\zeta_2) \sim N_{\sfp_1 + \sfp_2}(0,{\rm diag}(\Gamma_1^{-1}, \Gamma_2^{-1}))
\]
and
\[
\sup_n\bigg(\big\|\sqrt{n}(\ttheta_{1,n} - \theta^*_1)\big\|_p+\big\|\sqrt{nh}(\ttheta_{2,n} - \theta^*_2)\big\|_p\bigg)
<\infty
\]
for every $p>1$,
where $\zeta_1$ and $\zeta_2$ are $\sfp_1$ and $\sfp_2$-dimensional standard normal variables respectively, and 
\begin{align*}
	\Gamma_1 &= \frac12 \int_{\mathbb{R}^d} {\rm Tr} \Bigl( B^{-1}(\partial_{\theta_1}B)B^{-1}(\partial_{\theta_1}B)(x,\theta^*_1) \Bigr) \mu(dx), \\
	\Gamma_2 &= \int_{\mathbb{R}^d}(\partial_{\theta_2}a(x,\theta^*_2)' B(x,\theta^*_1)^{-1} \partial_{\theta_2}a(x,\theta^*_2)) \mu(dx).
\end{align*}
We assume integrability and non-degeneracy of $\Gamma_1$ and $\Gamma_2$. It is known that the quasi maximum likelihood estimator, the quasi Bayesian estimator and the hybrid type estimators possess these properties under certain mild conditions (\cite{yoshida2011polynomial}, \cite{uchida2012adaptive}, \cite{uchida2014adaptive}, \cite{kamatani2015hybrid}).
For instance, if we use the hybrid multistep estimator $\ttheta^H = (\ttheta^H_{1,n}, \ttheta^H_{2,n})$ by Uchida and Kamatani (\cite{kamatani2015hybrid}) as an initial estimator $\ttheta$, then above conditions are satisfied by Theorem 1 of \cite{kamatani2015hybrid}. \par
For $q_1,q_2\in(0,1]$, we define the objective functions $Q_{1,n}^{(q_1)}$ and $Q_{2,n}^{(q_2)}$ by
\begin{align*}
	Q_{1,n}^{(q_1)} = \hG_{1,n}[(\theta_1-\ttheta_{1,n})^{\otimes 2}] + \sum_{i_1=1}^{\sfp_1} \kappa_{1,n}^{i_1} |\theta_1^{i_1}|^{q_1}
\end{align*}
and
\begin{align*}
	Q_{2,n}^{(q_2)} = \hG_{2,n}[(\theta_2-\ttheta_{2,n})^{\otimes 2}] + \sum_{i_2=1}^{\sfp_2} \kappa_{2,n}^{i_2} |\theta_2^{i_2}|^{q_2},
\end{align*}
respectively, where $\hG_{k,n} ~ (k = 1,2)$ are some $\sfp_k \times \sfp_k$ random matrices such that $\hG_{k,n} \to^p \Gamma_k$ and that the family $\big\{|\hG_{k,n}|+\big(\det\hG_{k,n}\big)^{-1}\big\}_{k,n}$ is $L^{\infty-}$-bounded, and $\kappa_{k,n}^{i_k} = \alpha_{k,n}|\ttheta_{k,n}^{i_k}|^{-\gamma_k}$ for some numbers $\gamma_k > -(1 - q_k)$ and some sequences $(\alpha_{1,n})_n$ and $(\alpha_{2,n})_n$ satisfying
\[
	(\sqrt{n})^{2-q_1+\gamma_1} \alpha_{1,n} \rightarrow \infty, (\sqrt{n}) \alpha_{1,n} \rightarrow 0
\]
and
\[
	(\sqrt{nh})^{2-q_2+\gamma_2} \alpha_{2,n} \rightarrow \infty, (\sqrt{nh}) \alpha_{2,n} \rightarrow 0
\]
respectively. Then we have the penalized LSA estimators $\htheta^{(q_1)}_{1,n}$ and $\htheta^{(q_2)}_{2,n}$ satisfying
\begin{align*}
	\htheta^{(q_1)}_{1,n} \in \underset{\theta_1 \in \bar{\Theta}_1}{\rm argmin} ~ Q_{1,n}^{(q_1)}(\theta_1)
\end{align*}
and
\begin{align*}
	\htheta^{(q_2)}_{2,n} \in \underset{\theta_2 \in \bar{\Theta}_2}{\rm argmin} ~ Q_{2,n}^{(q_2)}(\theta_2).
\end{align*}
For these estimators $\htheta^{(q_1)}_{1,n}$ and $\htheta^{(q_2)}_{2,n}$, Theorems \ref{thm:rootn}-\ref{thm:rate} hold respectively. Additionally, we consider the limit distribution of the joint variable $((\htheta^{(q_1)}_{1,n})_{\ja_1},(\htheta^{(q_2)}_{2,n})_{\ja_2})$. Here $\ja_k, \jaa_k$ and $\jab_k$ are defined  similarly to $\ja, \jaa$ and $\jab$, respectively, for each $k=1,2$. 

Now we can rephrase Theorems 1-5 in the present situation. In particular, 
\begin{proposition}
	The convergence
	\begin{align*}
		\Bigl( \sqrt{n}(\htheta^{(q_1)}_{1,n} - \theta^*_1)_{\ja_1}, \sqrt{nh}(\htheta^{(q_2)}_{2,n} - \theta^*_2)_{\ja_2} \Bigr)\rightarrow^d & \Bigl( \mathfrak{G}_1\Gamma_1^{-\frac12}\zeta_1, \mathfrak{G}_2 \Gamma_2^{-\frac12}\zeta_2 \Bigr) \\
		& \sim  N_{\sfp_1^0+\sfp_2^0} \Biggl( 0,{\rm diag}\Bigl( ((\Gamma_1)_{\jaa_1})^{-1},((\Gamma_2)_{\jaa_2})^{-1} \Bigr) \Biggr)
	\end{align*}
	holds, where $\mathfrak{G}_k = \begin{bmatrix}  I_{\sfp_k^0} & ((\Gamma_k)_{\jaa_k})^{-1} (\Gamma_k)_{\jab_k} \end{bmatrix}, k=1,2$.
\end{proposition}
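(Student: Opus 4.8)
The plan is to reduce the claimed joint convergence to the two separate asymptotic normality results already available for the diffusion drift and diffusion coefficient parameters, and then to upgrade the marginal convergences to a joint one. Concretely, Theorem \ref{thm:asynorm} applies to each block: for the diffusion part we take $\rt = n^{-1/2}$, $\hG = \hG_{1,n}$, $G = \Gamma_1$, and the rate conditions $(\sqrt{n})^{2-q_1+\gamma_1}\alpha_{1,n}\to\infty$, $(\sqrt{n})\alpha_{1,n}\to 0$ guarantee $\rti a_T = o_p(1)$ and $\rt^{-(2-q)}b_T\to^p\infty$ (via the $\kappa_{k,n}^{i_k} = \alpha_{k,n}|\ttheta_{k,n}^{i_k}|^{-\gamma_k}$ specialization discussed after Theorem \ref{thm:asynorm}); likewise for the drift part with $\rt = (nh)^{-1/2}$, $\hG = \hG_{2,n}$, $G = \Gamma_2$. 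Applying Theorem \ref{thm:asynorm} to each block gives the key linearizations
\begin{align*}
	\sqrt{n}(\htheta^{(q_1)}_{1,n} - \theta^*_1)_{\ja_1} - \mathfrak{G}_1\{\sqrt{n}(\ttheta_{1,n} - \theta^*_1)\} &\rightarrow^p 0, \\
	\sqrt{nh}(\htheta^{(q_2)}_{2,n} - \theta^*_2)_{\ja_2} - \mathfrak{G}_2\{\sqrt{nh}(\ttheta_{2,n} - \theta^*_2)\} &\rightarrow^p 0,
\end{align*}
so the stacked vector $((\htheta^{(q_1)}_{1,n})_{\ja_1},(\htheta^{(q_2)}_{2,n})_{\ja_2})$, after the appropriate scaling and recentering, differs from $(\mathfrak{G}_1\{\sqrt{n}(\ttheta_{1,n}-\theta^*_1)\}, \mathfrak{G}_2\{\sqrt{nh}(\ttheta_{2,n}-\theta^*_2)\})$ by a term that converges to $0$ in probability jointly.

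Next I would invoke the joint convergence of the initial estimator that was assumed in this subsection, namely $(\sqrt{n}(\ttheta_{1,n}-\theta^*_1),\sqrt{nh}(\ttheta_{2,n}-\theta^*_2)) \rightarrow^d (\Gamma_1^{-1/2}\zeta_1,\Gamma_2^{-1/2}\zeta_2)$ with $(\zeta_1,\zeta_2)$ a standard normal of dimension $\sfp_1+\sfp_2$. Applying the continuous linear map $(v_1,v_2)\mapsto(\mathfrak{G}_1 v_1, \mathfrak{G}_2 v_2)$ and then Slutsky's lemma (to absorb the $o_p(1)$ difference above), one obtains
\begin{align*}
	\Bigl(\sqrt{n}(\htheta^{(q_1)}_{1,n}-\theta^*_1)_{\ja_1}, \sqrt{nh}(\htheta^{(q_2)}_{2,n}-\theta^*_2)_{\ja_2}\Bigr) \rightarrow^d \Bigl(\mathfrak{G}_1\Gamma_1^{-1/2}\zeta_1, \mathfrak{G}_2\Gamma_2^{-1/2}\zeta_2\Bigr).
\end{align*}
The limit is Gaussian because it is a linear image of the Gaussian vector $(\zeta_1,\zeta_2)$; its covariance is block diagonal since the two blocks involve $\zeta_1$ and $\zeta_2$ separately, and the computation $\mathfrak{G}_k\Gamma_k^{-1}\mathfrak{G}_k' = ((\Gamma_k)_{\jaa_k})^{-1}$ is exactly the Schur-complement identity already recorded in Theorem \ref{thm:asynorm} (and in the remark on the P-O estimator), using the diagonal structure $\Gamma_k = \mathrm{diag}$ only at the level of the full $\Gamma = \mathrm{diag}(\Gamma_1,\Gamma_2)$; within each $\Gamma_k$ no diagonality is needed beyond the generic identity $\mathfrak{G}_k\Gamma_k^{-1}\mathfrak{G}_k' = ((\Gamma_k)_{\jaa_k})^{-1}$.

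The one point requiring a little care — and the only real obstacle — is that Theorem \ref{thm:asynorm} as stated delivers a $\mathcal{G}$-stable convergence with a generically \emph{random} limiting information matrix, whereas here the limits $\Gamma_1,\Gamma_2$ are deterministic (because the process is ergodic, so the invariant-measure integrals defining $\Gamma_1,\Gamma_2$ are constants). In the deterministic-limit case stable convergence collapses to ordinary convergence in distribution, and the two marginal stable convergences automatically combine into a joint one once the initial estimator is known to converge jointly — which is precisely the joint statement assumed in this subsection. So after checking that the rate hypotheses on $(\alpha_{1,n})_n$ and $(\alpha_{2,n})_n$ indeed imply the hypotheses of Theorem \ref{thm:asynorm} for each block (routine), the proof is: linearize each block via Theorem \ref{thm:asynorm}, stack, push the assumed joint CLT for $\ttheta$ through the linear map $(\mathfrak{G}_1,\mathfrak{G}_2)$, apply Slutsky, and read off the block-diagonal Gaussian limit using the Schur-complement identity. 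I would also remark that selection consistency for both blocks, hence for the joint parameter, follows simultaneously from Theorem \ref{thm:select} (or Theorem \ref{thm:rate}) applied to each component.
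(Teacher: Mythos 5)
Your proposal is correct and follows essentially the same route as the paper: apply Theorem \ref{thm:asynorm} blockwise to obtain the two linearizations, stack them, and push the assumed joint convergence of $\bigl(\sqrt{n}(\ttheta_{1,n}-\theta^*_1),\sqrt{nh}(\ttheta_{2,n}-\theta^*_2)\bigr)$ through $(\mathfrak{G}_1,\mathfrak{G}_2)$ with Slutsky to read off the block-diagonal Gaussian limit via $\mathfrak{G}_k\Gamma_k^{-1}\mathfrak{G}_k'=((\Gamma_k)_{\jaa_k})^{-1}$. Your additional remarks (verifying the rate hypotheses and noting that stable convergence reduces to ordinary weak convergence for deterministic $\Gamma_k$) only make explicit what the paper leaves implicit.
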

\begin{proof}[Proof]
By Theorem \ref{thm:asynorm}, we have
\[
	\sqrt{n} (\htheta^{(q_1)}_{1,n} - \theta^*_1)_{\ja_1} - \mathfrak{G}_1 \bigl\{ \sqrt{n} (\ttheta_{1,n} - \theta^*_1)_{\ja_1} \bigr\} \rightarrow^p 0
\]
and
\[
	\sqrt{nh} ( \htheta^{(q_2)}_{2,n} - \theta^*_2 )_{\ja_2} - \mathfrak{G}_2 \bigl\{ \sqrt{nh} (\ttheta_{2,n} - \theta^*_2)_{\ja_2} \bigr\} \rightarrow^p 0.
\]
Therefore, 
\begin{align*}
	&\begin{bmatrix}
		\sqrt{n} (\htheta^{(q_1)}_{1,n} - \theta^*_1)_{\ja_1} \\
		\sqrt{nh} (\htheta^{(q_2)}_{2,n} - \theta^*_2)_{\ja_2}
	\end{bmatrix}
	\rightarrow^d
	\begin{bmatrix}
		\mathfrak{G}_1 \Gamma_1^{-\frac12} \zeta_1 \\
		\mathfrak{G}_2 \Gamma_2^{-\frac12} \zeta_2
	\end{bmatrix}	
	\\
	&\sim N_{\sfp_1^0+\sfp_2^0} \Biggl( 0,{\rm diag}\Bigl( ((\Gamma_1)_{\jaa_1})^{-1},((\Gamma_2)_{\jaa_2})^{-1} \Bigr) \Biggr)
\end{align*}
\end{proof}

\subsection{Non ergodic case : volatility estimation in finite time horizon}\label{nonergodic}
In this subsection, we will deal with the case where the Fisher information matrix is not deterministic.  We consider the following stochastic regression model
\begin{align}\label{modelofnonergodic}
	Y_t = Y_0 + \int_0^t b_s ds + \int_0^t \sigma(X_s,\theta) dW_s, \quad t \in [0,T] ,
\end{align}
where $W$ is an $\sf r$-dimensional standard Wiener process independent of the initial value of $Y_0$, $X$ and $b$ are progressively measurable processes with values in $\mathbb{R}^{\sf d}$ and $\mathbb{R}^{\sf m}$, respectively. $\sigma$ is an $\mathbb{R}^{\sf m} \otimes \mathbb{R}^{\sf r}$-valued measurable function defined on $\mathbb{R}^{\sf d} \times \Theta$, and $\Theta$ is a bounded domain in $\mathbb{R}^{\sfp}$ with a locally Lipschitz boundary. Additionally, we define $S = \sigma^{\otimes 2} = \sigma \sigma'$. The data set consists of discrete observations $(X_{t_j},Y_{t_j})_{j=0}^n$ with $t_j = jT/n$ and $T$ is fixed. \par
Here, we assume that there exists an estimator $\ttheta_n$ of $\theta^*$ such that
\[
	\sqrt{n} (\ttheta_n - \theta^*) \rightarrow^{d_s} \Gamma^{-\frac12} \zeta
\]
as $n\to\infty$, and for any continuous functions ${\sf f} : \mathbb{R}^{\sfp} \rightarrow \mathbb{R}$ of at most polynomial growth, 
\[
	E[ {\sf f} (\sqrt{n}(\ttheta_n - \theta^*)) ] \rightarrow E[ {\sf f}(\Gamma^{-\frac12}\zeta) ],
\]
where $\Gamma$ is the Fisher information matrix given by
\[
	\Gamma = \frac{1}{2T} \int_0^T {\rm Tr} \Bigl( (\partial_{\theta}S) S^{-1} (\partial_{\theta}S) S^{-1} (X_t, \theta^*) \Bigr) dt,
\]
$\zeta$ is a $\sfp$-dimensional standard normal random variable independent of $\Gamma$ and $\rightarrow^{d_s}$ means the $\sigma(\Gamma)$-stable convergence in distribution. Here we remark that the Fisher information matrix $\Gamma$ is not necessarily deterministic. In fact, Uchida and Yoshida \cite{uchida2013quasi} proved that the quasi maximum likelihood estimator and teh quasi Bayesian estimator have these properties under mild regularity conditions. An essential condition in their argument is the non-degeneracy of a key index $\chi_0$:
\begin{assumption}\label{20180926-1}
	For every $L > 0$, there exists $c_L > 0$ such that 
	\[
		P[\chi_0 \le r^{-1}] \le \frac{c_L}{r^L}\qquad (r>0)
	\]
	where
	\[
		\chi_0 = \inf_{\theta \ne \theta^*} \frac{-\mathbb{Y}(\theta)}{|\theta-\theta^*|^2}
	\]
	with
	\[
		\mathbb{Y}(\theta) = - \frac{1}{2T} \int_0^T \Biggl\{ \log\Bigl( \frac{\det S(X_t,\theta)}{\det S(X_t,\theta^*)} \Bigr) + {\rm Tr} \Bigl( S^{-1}(X_t,\theta)S(X_t,\theta^*) - I_d \Bigr) \Biggr\} dt.
	\]
\end{assumption}
For the initial estimator, for example, we can take the maximum likelihood type estimator $\ttheta^M_n$ that satisfies
\[
	\mathbb{H}_n(\ttheta^M_n) = \sup_{\theta \in \Theta} \mathbb{H}_n(\theta),
\]
or the Bayes type estimator $\ttheta^B_n$ for a prior density $\pi : \Theta \rightarrow \mathbb{R}_+$ with respect to the quadratic loss defined by
\[
	\ttheta^B_n = \Bigl( \int_{\Theta} \exp( \mathbb{H}_n(\theta) ) \pi(\theta) d\theta \Bigr)^{-1} \int_{\Theta} \theta \exp( \mathbb{H}_n(\theta) ) \pi(\theta) d\theta,
\]
where $\mathbb{H}_n(\theta)$ is a quasi-log likelihood function defined by
\begin{align} \label{qllfofndiff}
	\mathbb{H}_n(\theta) = -\frac12 \sum_{i=1}^n \Bigl\{ \log\det S(X_{t_{i-1}},\theta) + \frac{1}{h} S(X_{t_{i-1}},\theta)^{-1}[(\Delta_i Y)^{\otimes 2}] \Bigr\}.
\end{align}
Then we can use the QLA method in \cite{uchida2013quasi} to show the stable convergence and the $L^p$-boundedness of the estimators. In order to verify Assumption \ref{20180926-1} in practice, we may apply one of criteria given in \cite{uchida2013quasi}.
\begin{en-text}
\begin{remark*}
	About non-degeneracy of $\chi_0$, here we started with a good initial estimator. This means a certain good non-degeneracy has been assumed from beginning. Logically, we need to assume an ordinary non-degeneracy like [H2] in Uchida and Yoshida(2013 \cite{uchida2013quasi}). The geometric criterion is expected to work for multi-dimensional $\theta^*$.
\end{remark*}
\end{en-text}

Here we define the objective function
\[
	Q^{(q)}_n(\theta) = \hG_n [(\theta-\ttheta_n)^{\otimes 2}] + \sum_{i=1}^{\sfp} \kappa_n^i |\theta_i|^q
\]
and penalized LSA estimator $\htheta^{(q)}_n \in \underset{\theta \in \Theta}{\text{argmin}} ~ Q_n^{(q)}(\theta)$. We can take 
\bea\label{20180926-5} 
\hG_n = -\frac{1}{n} \partial_{\theta}^2 \mathbb{H}_n(\tilde{\theta}^M)
1_{\big\{-\partial_{\theta}^2 \mathbb{H}_n(\tilde{\theta}^M)\in\cals_+\big\}}+\frac{1}{n}I_\sfp
\eea 
\noindent
when we use the QMLE as an initial estimator. Let $\kappa_n^i = \alpha_n |\ttheta_{n,i}|^{-\gamma}$ for the number $\gamma > -(1-q)$ and the sequence $\alpha_n$ satisfying the conditions
\[
	(\sqrt{n})^{2-q+\gamma} \alpha_n \rightarrow \infty, \quad \sqrt{n} \alpha_n \rightarrow 0.
\]
Similarly to the previous sections, we can show that Theorems \ref{thm:rootn}-4 hold for this penalized LSA estimator. On the other hand, we should choose $\hG_n=I_\sfp$, in place of (\ref{20180926-5}), for the P-O estimator. 

\section{Simulations}
In this section we report two simulations. The first one is the Cox model in Section \ref{cox-ergodic} and the second one is the diffusion process in Section \ref{nonergodic}. For each simulation we perform 1000 Monte Carlo replications. \%( ) in Tables 1-4 denotes the number of times, in percentage over 1000 iterations, that the estimator chooses the true model. For comparison we use the unified LASSO type estimator and the Bridge type estimator. The unified LASSO type estimator is the special case of penalized LSA estimator where $q=1$. The Bridge estimator is not the special case of penalized LSA estimator, but we use this phraseology when the penalty has the form $\rti \sum_i |\theta_i|^q$, i.e., $r = 1, \gamma=0$ and $q<1$. \par
For the convenience of calculation, we use identity matrix as a coefficient matrix $\hat{G}$. Thus, the penalized LSA estimator is not efficient, and we use the P-O estimator in order to obtain the efficient estimator. \par
It has been shown through the simulation studies that the penalized LSA estimator can select the correct model if we choose appropriate tuning parameters and the P-O estimator has good performance for the active parameters.

\subsection{Simulation for the Cox model}
We consider the Cox model (\ref{290120-2}) in Section \ref{cox-ergodic} with $\alpha = 1$. Let $\sfp = 20$, then the parameter space $\Theta$ is $[ -10,10 ]^{20}$. The covariate process $X = (X_t)_{t \in [0, T]}$ is a 20-dimensional OU process satisfying the following stochastic differential equations
\[
	dX^i_t = -{\sf a}_i X^i_t dt + 0.4 dW^i_t, \quad X_0 = 0, \quad t \in [0,T]
\]
where ${\sf a}_i$ $( i=1,\cdots,20)$ are constants given by
\begin{alignat*}{5}
	{\sf a}_{1} = {\sf a}_{6}   = &{\sf a}_{11} = {\sf a}_{16} = 0.15, \\
	{\sf a}_{2} = {\sf a}_{7}   = &{\sf a}_{12} = {\sf a}_{17} = 0.2, \\
	{\sf a}_{3} = {\sf a}_{8}   = &{\sf a}_{13} = {\sf a}_{18} = 0.25, \\
	{\sf a}_{4} = {\sf a}_{9}   = &{\sf a}_{14} = {\sf a}_{19} = 0.3, \\
	{\sf a}_{5} = {\sf a}_{10} = &{\sf a}_{15} = {\sf a}_{20} = 0.35.
\end{alignat*}
and $W = (W^i)_{i = 1,\cdots,20}$ is a 20-dimensional standard Wiener process. Data $N = (N_t)_{t \in [0, T]}$ is a sample path of the point process with intensity $\lambda(t,\theta^*)$ in (\ref{290120-2}), where the true values $\theta^*$ of the parameter is
\[
	\theta^* = [2, -1, 1, -0.5, -1.5, 1.5, 0.5, 0.75, 0, 0, 0, 0, 0, 0, 0, 0, 0, 0, 0, 0]' .
\]
Let $\mathcal{L}_T(\theta) = \mathbb{L}_T(\theta) = -\ell_T(\theta)$ in (\ref{qllfofpp}) and we use QMLE for the initial estimator $\ttheta$. Then the objective function is denoted by
\[
	Q_T^{(q)}(\theta) = (\theta-\ttheta)' (\theta-\ttheta)+ \sum_{j=1}^{20} \kappa_T^j |\theta_j|^q.
\]
where $\kappa_T^j = \alpha_T |\ttheta_j|^{-\gamma}, \alpha_T = (\frac{1}{\sqrt{T}})^r$, $1 < r < 2-q+\gamma$. Let the triplet of tuning parameters $(\gamma, r, q) = (1, 1.2, 0.3)$. We will consider the cases $T=50, 100, 200$ and $400$.
Table 1 compares the results of the variable selection of the penalized LSA estimator, the unified LASSO type estimator and the Bridge type estimator. Here, the unified LASSO type estimator and the Bridge type estimator are the penalized LSA estimator with tuning parameter $(\gamma, r, q) = (1, 1.2, 1)$ and $(\gamma, r, q)=(0,1,0.3)$ respectively. \par
Table 2 compares the means and standard deviations (parentheses) for the three estimators (initial estimator, penalized LSA estimator and P-O estimator) and shows the results of the variable selection for penalized LSA estimator in the case $T=200$. \par
\begin{table}[htbp]
\caption{Results of the variable selection under T=50,100,200,400.}
\begin{center}
	\begin{tabular}{cccccc} \hline \hline
		 & $(\gamma,$ $r,$ $q)$ & $T=50$ & $T=100$ & $T=200$ & $T=400$ \\ \hline 
		\%(p-LSA) & (1, 1.2, 0.3) & 32.1 & 70.4 & 96.8 & 99.9 \\
		\% (unified LASSO) & (1, 1.2, 1) & 5.9 & 17.4 & 47.1 & 79.6 \\
		\% (Bridge type) & (0, 1, 0.3) & 8.9 & 21.9 & 52.3 & 80.3 \\ \hline
	\end{tabular}
\end{center}
\end{table}

\begin{table}[htbp]
\begin{center}
\caption{The summary of results for the simulation under $T=200$.}
	\scalebox{0.95}{
	\begin{tabular}{ccccccccccc} \hline \hline
	
		& $\theta_{1}$ & $\theta_{2}$ & $\theta_{3}$ & $\theta_{4}$ & $\theta_{5}$ & $\theta_{6}$ & $\theta_{7}$ & $\theta_{8}$ & $\theta_{9}$ & $\theta_{10}$ \\
		true & 2 & -1 & 1 & -0.5 & -1.5 & 1.5 & 0.5 & 0.75 & 0 & 0 \\ \hline
		
		initial & 1.9938 & -0.9936 & 0.9941 & -0.5003 & -1.4909 & 1.4978 & 0.5009 & 0.7490 & 0.0001 & -0.0002 \\
		 & \small(0.0722) & \small(0.0830) & \small(0.0858) & \small(0.0889) & \small(0.0962) & \small(0.0745) & \small(0.0851) & \small(0.0880) & \small(0.0908) & \small(0.0974) \\
		p-LSA & 1.9918 & -0.9872 & 0.9877 & -0.4758 & -1.4877 & 1.4946 & 0.4750 & 0.7383 & -0.0001 & -0.0008 \\
		& \small(0.0728) & \small(0.0840) & \small(0.0868) & \small(0.1035) & \small(0.0965) & \small(0.0748) & \small(0.1049) & \small(0.0904) & \small(0.0265) & \small(0.0340) \\
		P-O & 1.9959 & -0.9971 & 0.9964 & -0.4979 & -1.4931 & 1.4998 & 0.4946 & 0.7523 & -0.0003 & -0.0007 \\
		 & \small(0.0565) &  \small(0.0682) & \small(0.0713) & \small(0.0892) & \small(0.0823) & \small(0.0602) & \small(0.0915) & \small(0.0730) & \small(0.0228) & \small(0.0307) \\ 
		\%(p-LSA) & 100.0 & 100.0 & 100.0 & 98.8 & 100.0 & 100.0 & 98.4 & 100.0 & 99.4 & 99.2  \\ \hline \noalign{\vskip+.80mm} \cline{1-11}
		
		\multicolumn{1}{c}{} & $\theta_{11}$ & $\theta_{12}$ & $\theta_{13}$ & $\theta_{14}$ & $\theta_{15}$ & $\theta_{16}$ & $\theta_{17}$ & $\theta_{18}$ & $\theta_{19}$ & $\theta_{20}$ \\
		true & 0 & 0 & 0 & 0 & 0 & 0 & 0 & 0 & 0 & 0 \\ \hline
		initial & 0.0014 & -0.0038 & 0.0007 & 0.0026 & 0.0063 & -0.0015 & 0.0042 & 0.0013 & -0.0048 & -0.0014 \\
		 & \small(0.0760) & \small(0.0794) & \small(0.0872) & \small(0.0947) & \small(0.0971) & \small(0.0701) & \small(0.0802) & \small(0.0896) & \small(0.0914) & \small(0.0941) \\
		p-LSA & 0.0000 & 0.0003 & 0.0007 & 0.0002 & -0.0004 & 0.0000 & 0.0011 & -0.0003 & 0.0013 & 0.0004 \\
		 & \small(0.0000) & \small(0.0185) & \small(0.0252) & \small(0.0240) & \small(0.0248) & \small(0.0000) & \small(0.0194) & \small(00239) & \small(0.0247) & \small(0.0208) \\
		P-O & 0.0000 & -0.0003 & 0.0007 & 0.0000 & -0.0003 & 0.0000 & 0.0007 & -0.0004 & 0.0010 & 0.0004 \\
		 & \small(0.0000) & \small(0.0152) & \small(0.0221) & \small(0.0188) & \small(0.0259) & \small(0.0000) & \small(0.0134) & \small(0.0214) & \small(0.0199) & \small(0.0213) \\
		\%(p-LSA) & 100.0 & 99.7 & 99.4 & 99.4 & 99.5 & 100.0 & 99.7 & 99.5 & 99.7 & 99.5 \\ \cline{1-11}
	\end{tabular}
	}
\end{center}
\end{table}

\subsection{Simulation for a diffusion type process}
We consider the model (\ref{modelofnonergodic}) in Section \ref{nonergodic}. Let $\sfp = 10$, i.e., the parameter space $\Theta$ is $[-10,10]^{10}$. The process $Y$ is defined by 
\[
	Y_t = \int_0^t \sigma(X_s, \theta) dW_s, \quad t \in [0,1],
\]
where $X$ is a 10-dimensional OU process satisfying the following stochastic differential equation
\[
	dX_t = -0.2 X_t dt + 0.5 I_{10} dw_t, \quad X_0 = 0, \quad t \in [0,1]
\]
and $\sigma(x,\theta) = \exp(\sum_{j=1}^{10} \theta_j x_j) \wedge M_0, M_0 = 10^5$.
Here $w$ is a $10$-dimensional standard Wiener process independent of $W$. 
We generated
the data $(Y_{t_i}, X_{t_i})_{i = 0,1,...,n}, t_i = \frac{i}{n}$ with 
\[
	\theta^* = [1,1,-1,-1,0.5,0,0,0,0,0]' .
\]
Let $\mathcal{L}_n(\theta) = \mathbb{L}_n(\theta) = -\mathbb{H}_n(\theta)$ in (\ref{qllfofndiff}). We used the QMLE for the initial estimator $\ttheta$. \par
In order to apply our methods to this model, we use Theorem 6 in (\cite{uchida2013quasi}). By the definition of $\sigma(x,\theta)$, [A1] holds. [B2] is satisfied if we choose the stopping time $\tau \equiv 0$. Now we need to check [A3$'$]. Since ${\rm supp}\mathcal{L}\{X_0\} = \{0\}$, we can take $0 \in U \subset \{ x \in \mathbb{R}^{10} ; \sigma(x,\theta)<M,\forall \theta \}$. If we define $f(x,\theta) = (m_0\sum_j(\theta_j-\theta^*_j)x_j)/|\theta-\theta^*|$ for sufficiently small $m_0$ when $\theta \ne 0$ and $f(x,0) = \epsilon_0$ for some positive number $\epsilon_0$, then (i) is satisfied for $\varrho=2$. Next we take a covering $\{ \Theta_k \}_{k=1,...,11}$ such that $\Theta_k = \{ \theta \in \Theta; |\theta_k - \theta^*_k|>0, |\theta_k - \theta^*_k| \ge |\theta_j - \theta^*_j|, \forall j \}$ for $k=1,...,10$ and $\Theta_{11} = \{0\} \subset \Theta$. For $\Theta_k, (k=1,...,10)$, if we take $\xi_0 = e_k$ and $\Psi(P^{\perp}_{\xi_0}x, \theta) = (\sum_{j \ne k} (\theta_j-\theta^*_j)x_j)/(\theta_k-\theta^*_k)$, then $|f(x,\theta)| \ge \frac{1}{\sqrt{10}} (\xi_0 \cdot x + \Psi(P^{\perp}_{\xi_0}x, \theta) )$, and (ii) holds. \par
Then the objective function is denoted by
\[
	Q^{(q)}_n (\theta) = (\theta-\ttheta)' (\theta-\ttheta) + \sum_{j=1}^{10} \kappa_n^j |\theta_j|^q
\]
where $\kappa_n^j = \alpha_n |\ttheta_j|^{-\gamma}$, $\alpha_n = (\frac{1}{\sqrt{n}})^r, 1 < r < 2-q-\gamma$.
We considered the cases where $n=2500, 50000, 10000, 20000$ and the triplet of tuning parameters $(\gamma, r, q)=(3.2,1.2,0.3)$. In the same way as Table 1, Table 3 compares the results of the variable selection of the penalized LSA estimator, the unified LASSO type estimator and the Bridge type estimator. \par
Table 4 compares the means and the standard deviations (parentheses) for the three estimators (initial estimator, penalized LSA estimator and P-O estimator) in the case $n=10000$.
\\
\begin{table}[htbp]
\caption{Results for the variable selection under n=2500,5000,10000,20000.}
\begin{center}
	\begin{tabular}{cccccc} \hline \hline
		 & $(\gamma$, $r$, $q)$ & $n=2500$ & $n=5000$ & $n=10000$ & $n=20000$ \\ \hline
		\% (p-LSA) & (3.2, 1.2, 0.3) & 64.8 & 86.3 & 97.8 & 99.9 \\
		\% (unified LASSO) & (3.2, 1.2, 1) & 54.0 & 77.2 & 93.8 & 98.6 \\
		\% (Bridge type) & (0, 1, 0.3) & 0.7 & 0.9 & 1.0 & 1.7 \\ \hline
	\end{tabular}
\end{center}
\end{table}

\begin{table}[htbp]
\begin{center}
\caption{The summary of results for the simulation under $n=10000$.}
	\scalebox{0.95}{
	\begin{tabular}{ccccccccccc} \hline \hline
		 & $\theta_{1}$ & $\theta_{2}$ & $\theta_{3}$ & $\theta_{4}$ & $\theta_{5}$ & $\theta_{6}$ & $\theta_{7}$ & $\theta_{8}$ & $\theta_{9}$ & $\theta_{10}$ \\
		true & 1 & 1 & -1 & -1 & 0.5 & 0 & 0 & 0 & 0 & 0 \\ \hline
		initial & 1.0020 & 0.9952 & -0.9981 & -1.0017 & 0.5021 & -0.0040 & -0.0006 & -0.0027 & 0.0046 & 0.0019 \\
		 & \small(0.0879) & \small(0.0896) & \small(0.0835) & \small(0.0847) & \small(0.0842) & \small(0.0885) & \small(0.0896) & \small(0.0849) & \small(0.0839) & \small(0.0896) \\ 
		p-LSA & 1.0013 & 0.9945 & -0.9974 & -1.0010 & 0.4856 & -0.0003 & 0.0000 & 0.0000 & 0.0000 & 0.0003 \\
		 & \small(0.0881) & \small(0.0899) & \small(0.0838) & \small(0.0849) & \small(0.1097) & \small(0.0107) & \small(0.0000) & \small(0.0000) & \small(0.0000) & \small(0.0000) \\
		P-O & 1.0017 & 0.9980 & -0.9960 & -0.9978 & 0.4936 & -0.0002 & 0.0000 & 0.0000 & 0.0000 & 0.0002 \\
		 & \small(0.0663) & \small(0.0694) & \small(0.0731) & \small(0.0676) & \small(0.0916) & \small(0.0058) & \small(0.0000) & \small(0.0000) & \small(0.0000) & \small(0.0049) \\
		\%(p-LSA) & 100.0 & 100.0 & 100.0 & 100.0 & 98.0 & 99.9 & 100.0 & 100.0 & 100.0 & 99.9 \\ \hline
	\end{tabular}
	}
\end{center}
\end{table}

\def\cprime{$'$} \def\cprime{$'$}


\end{document}